\theoremstyle{plain}
    \newtheorem{theorem}{Theorem}[section]
    \newtheorem{proposition}[theorem]{Proposition}
    \newtheorem{corollary}[theorem]{Corollary}
    \newtheorem{lemma}[theorem]{Lemma}
\theoremstyle{remark}
    \newtheorem{remark}[theorem]{Remark}
\theoremstyle{definition}
    \newtheorem{definition}[theorem]{Definition}
\newcommand{\I}{\mathbf{1}}
\newcommand{\LL}{\mathrm{L}}
\DeclareMathOperator{\SL}{\mathrm{SL}}
\DeclareMathOperator{\PSL}{\mathrm{PSL}}
\DeclareMathOperator{\PPSL}{\mathrm{PPSL}}
\DeclareMathOperator{\cconv}{\overline{conv}}
\title{Non-inner amenability of the Thompson groups $T$ and $V$}
\date{\today}
\author{Uffe Haagerup}
\address{%
    Department of Mathematics and Computer Science\\
    University of Southern Denmark\\
    Campusvej~55\\
    DK-5230 Odense~M%
}
\author{Kristian Knudsen Olesen}
\address{%
    Department of Mathematical Sciences\\
    University of Copenhagen\\
    Universitetsparken~5\\
    DK-2100 Copenhagen~Ø%
}
\email{olesen@math.ku.dk}
\thanks{Both authors are supported by the ERC Advanced Grant no.\ OAFPG 247321
and the Danish National Research Foundation through the Centre for Symmetry
and Deformation (DNRF92). The first author is also supported by the Danish
Natural Science Research Council.}
\begin{document}

\begin{abstract}
    In this paper we prove that the Thompson groups~$T$ and~$V$ are not inner
    amenable.  In particular, their group von Neumann algebras do not have
    property~$\Gamma$.  Moreover, we prove that if the reduced group
    $C^\ast$\mbox-algebra of~$T$ is simple, then the Thompson group~$F$ is
    non-amenable.  Furthermore, we give a few new equivalent characterizations
    of amenability of~$F$.
    \end{abstract}

\maketitle

%%%%%%%%%%%%%%%%%%%%%%%%%%%%%%%%%%%%%%%%%%%%%%%%%%%%%%%%%%%%%%%%%%%%%%%%%%%%%
%%%
%%%  INTRODUCTION
%%%
\section{Introduction}
                                                             \label{sec:intro}

The Thompson groups $F$,~$T$ and~$V$ where introduced by Richard Thompson in
1965.  They are countable discrete groups of piecewise linear bijections of
the half open interval~$[0,1)$ onto itself and satisfy $F \subseteq T
\subseteq V$.  We refer to the paper~\cite{CFP96} by Cannon, Floyd and Parry
for a detailed introduction to the subject.  It is well-known that the
Thompson groups~$T$ and~$V$ are non-amenable, but it is a major open problem
to decide whether or not $F$ is amenable.  All three groups are ICC, that is,
all conjugacy classes of non-trivial elements are infinite, and hence their
von Neumann algebras $\LL F$, $\LL T$ and $\LL V$ are all factors of type
II$_1$.  Jolissaint~\cite{Jol97} proved that $F$ is inner amenable in the
sense of Effros~\cite{Eff}.  He later strengthened this result by proving that
$\LL F$ is a McDuff factor; see~\cite{Jol98}.  In particular, this means that
$\LL F$ has property~$\Gamma$, and hence, by a result of Effros~\cite{Eff},
$F$ is inner amenable.  The first and main result of this paper is that $T$
and~$V$ are not inner amenable, and hence, by the same result of Effros, $\LL
T$ and~$\LL V$ do not have property~$\Gamma$.  Therefore, they are also not
McDuff factors.  The fact that $F$ is inner amenable was proved in a different
way by Ceccherini-Silberstein and Scarabotti~\cite{CSS01}, a few years later.

Another result of this paper connects amenability of~$F$ with simplicity of
the reduced group $C^\ast$\mbox-algebra of~$T$.  More precisely, we prove that
if the reduced group $C^\ast$\mbox-algebra $C_r^\ast(T)$ is simple, then $F$
is non-amenable.  Shortly after the results of this paper where announced by
the first named author at the Fields Institute in Toronto in March~2014, this
result was re-obtained by Breuillard, Kalantar, Kennedy and
Ozawa~\cite{BKKO14}, using completely different methods.  Very recently, this
result was also re-proved by Le~Boudec and Matte Bon~\cite{LBMB16}.  They also
showed the converse of this statement.  This result is not the only connection
between amenability of~$F$ and simplicity of its reduced group
$C^\ast$-algebra.  Indeed, it has been well-known for some time that $F$ is
amenable if and only if its reduced group $C^\ast$\mbox-algebra has a unique
tracial state, and it has been a long-standing open problem set forth by de la
Harpe (see, for example,~\cite{dlH07}) to decide whether the reduced group
$C^\ast$\mbox-algebra of a group is simple if and only if it has a unique
tracial state.  During the last few years tremendous progress has been made on
the topic of $C^\ast$-simple groups (that is, groups whose reduced group
$C^\ast$-algebra is simple) and groups with the unique trace property (that
is, groups whose reduced group $C^\ast$-algebra has a unique tracial state).
This new development began when Kalantar and Kennedy~\cite{KK14} gave new
characterizations of $C^\ast$\mbox-simple groups in terms of boundary actions.
Immediately after, Breuillard, Kalantar, Kennedy and Ozawa~\cite{BKKO14}
proved several striking results, including that $C^\ast$-simplicity implies
the unique trace property.  Very recently, Le~Boudec,~\cite{Bou15}, gave
counterexamples to the remaining implication of the long-standing open problem
of de la Harpe by providing examples of groups which are not $C^\ast$-simple,
but have the unique trace property.  This settled completely the relationship
between $C^\ast$-simplicity and the unique trace property.  Further examples
of non-$C^\ast$-simple groups with the unique trace property were provided by
Ivanov and Omland~\cite{IO16}.

In addition to the results mentioned thus far, we investigate the
$C^\ast$\mbox-al\-ge\-bras generated by the images of~$F$,~$T$ and~$V$ via a
representation discovered by Nekrashevych~\cite{Nek04}. More precisely, we
prove that these $C^\ast$-algebras are distinct, and that that one generated
by the image of~$V$ is the Cuntz algebra~$\mathcal O_2$.  Furthermore, we give
new characterizations of amenability of~$F$ in terms of the size of certain
explicit ideals in the reduced group $C^\ast$-algebras of~$F$ and $T$, as well
as in terms of whether certain convex hulls contain zero.

We end the introduction by giving the definition of the Thompson groups.  The
Thompson group~$V$ is the set of all piecewise linear bijections of $[0,1)$
which are right continuous, have finitely many points of
non-differentiability, all being dyadic rationals, and have a derivative which
is a power of~$2$ at each point of differentiability.  It is easy to see that
these maps fix the set of dyadic rationals in~$[0,1)$.  The Thompson group $T$
consists of those elements of~$V$ which define a homeomorphism of $[0,1)$,
when identified with $\mathbb R / \mathbb Z$ in the standard way.
Equivalently, $T$ is the set of elements in~$V$ which have at most one point
of discontinuity with respect to the usual topology on~$[0,1)$.  Finally, the
Thompson group $F$  consists of those elements of~$V$ which are, in fact,
homeomorphisms of $[0,1)$, or, equivalently, the set of element $g \in T$
satisfying $g(0) = 0$.  All three groups are finitely generated and also
finitely presented in the following generators: $F$ is generated by~$A$
and~$B$; $T$ is generated by $A$,~$B$ and~$C$; and $V$ is generated by
$A$,~$B$, $C$ and~$\pi_0$, where $A$,~$B$, $C$ and~$\pi_0$ are defined
in~\cite{CFP96}.  Besides these standard generators, we will also consider the
following element~$D$ of~$T$, given by $D(x) = x + \frac 34$, for $x \in
[0,\frac 14)$, and $D(x) = x - \frac 14$ for $x \in [\frac 14,1)$.  Note that
$C^3 = D^4 = \I$.  Moreover, the elements~$C$ and~$D$ also generate~$T$ as a
group, as it is easily seen that $A = D^2 C^2$ and $B = C^2 D A$.

%%%%%%%%%%%%%%%%%%%%%%%%%%%%%%%%%%%%%%%%%%%%%%%%%%%%%%%%%%%%%%%%%%%%%%%%%%%%
%%
%%  THE ISOMORPHISM BETWEEN T AND PPSL(2,Z)
%%
\section{The isomorphism between \texorpdfstring{$T$}{T} and
\texorpdfstring{$\PPSL(2, \mathbb Z)$}{PPSL(2,Z)}}
                                                               \label{sec:iso}

In order to prove that $T$ and~$V$ are not inner amenable, we shall use that
$T$ is isomorphic to a group of piecewise fractional linear transformations.
This fact was originally proved by Thurston, as explained
in~\cite[\S7]{CFP96}.  However, Thurston realized $T$ as M\"{o}bius
transformations of the interval~$[0,1]$, whereas we will use a slightly
modified version of Thurston's result, given by Imbert
in~\cite[Theorem~1.1]{Imb97}, where $T$ is realized as M\"{o}bius
transformations of~$\mathbb R \cup \{\infty\}$ instead.  The construction is
essentially the same, and we shall explain the difference in
Remark~\ref{questionmark} below.  A description of the isomorphism is also
given by Fossas in~\cite{Fos11}.  The second named author would like to thank
Vlad Sergiescu for several fruitful discussions and his help in sorting out
the origin of the particular version of Thurston’s result we are presenting
here.

Recall that the group $\PSL(2, \mathbb Z) = \SL(2, \mathbb Z) / \{\pm\I\}$
acts in a natural way on $\mathbb R \cup \nobreak \{\infty\}$ via M\"{o}bius
transformations, that is, for $x \in \mathbb R \cup \nobreak \{\infty\}$,
\begin{equation}
                                                                \label{mobius}
    g(x) = \frac{ax+b}{cx+d}, \qquad
    \text{when} \quad g = \begin{bmatrix} a & b \\ c & d \end{bmatrix}
    \{\pm\I\} \in \PSL(2, \mathbb Z).
    \end{equation}
This action is faithful, in the sense that only the neutral element acts
trivially.  Recall also that the elements of $\PSL(2,\mathbb Z)$ act by
homeomorphisms in an orientation-preserving way.

\begin{definition}
    We denote by $\PPSL(2, \mathbb Z)$ the group of homeomorphisms of~$\mathbb
    R \cup \nobreak \{\infty\}$ which are piecewise in $\PSL(2, \mathbb Z)$,
    that is, piecewise of the form~\eqref{mobius}, and which have only
    finitely many breakpoints, all them being in~$\mathbb Q \cup \nobreak
    \{\infty\}$.
    \end{definition}

In the above definition, a breakpoint\label{def:breakpoint} of an element $g
\in \PPSL(2, \mathbb Z)$ should be understood as a point $x \in \mathbb R \cup
\nobreak \{\infty\}$ for which there is no open neighbourhood~$U$ so that
$g$~acts on~$U$ as an element of~$\PSL(2, \mathbb Z)$.

\begin{theorem}[Thurston]
                                                          \label{thm:thurston}
    There exists a homeomorphism $\phi$ of $\mathbb R \cup \nobreak
    \{\infty\}$ onto $\mathbb R / \mathbb Z = S^1$ such that the map $\Phi
    \colon \PPSL(2,\mathbb Z) \to T$ given by
    \begin{equation*}
        g \mapsto \phi \circ g \circ \phi^{-1}
        \end{equation*}
    is an isomorphism.
    \end{theorem}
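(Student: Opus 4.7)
The plan is to construct $\phi$ from an order-preserving bijection between $\mathbb Q \cup \{\infty\}$ and the dyadic rationals in $S^1$. Such a bijection arises canonically from the Farey tesselation of the hyperbolic half-plane: rooting the dual infinite binary tree at the ideal triangle with vertices $0$, $1$, $\infty$, its vertices enumerate both the Farey intervals of $\mathbb R \cup \{\infty\}$ (pairs $p/q < p'/q'$ with $|pq' - p'q| = 1$) and the standard dyadic subintervals of $S^1$; matching these two families yields the desired bijection of endpoints. Monotonicity together with compatibility with nested subdivision forces this bijection to extend uniquely to a homeomorphism $\phi \colon \mathbb R \cup \{\infty\} \to S^1$ that carries $\mathbb Q \cup \{\infty\}$ onto the dyadic rationals in $S^1$.

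The heart of the proof is to verify $\Phi(g) \in T$ for every $g \in \PPSL(2, \mathbb Z)$. Since $g$ has finitely many breakpoints, all rational, their images under $\phi$ are finitely many dyadic points. After a further standard dyadic refinement, one may arrange that on each piece of the resulting partition $g$ acts as a single element $h \in \PSL(2, \mathbb Z)$ sending one Farey interval $J$ onto another Farey interval $J'$. The key combinatorial lemma is that such an $h$ induces, on the infinite binary subtrees hanging below $J$ and $J'$, precisely the canonical tree isomorphism; consequently $\phi \circ h \circ \phi^{-1}$ is affine on the dyadic interval $\phi(J)$, with slope $2^k$ for some integer $k$ depending only on the relative depths of $J$ and $J'$ in the Farey tree. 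Gluing these pieces yields a piecewise linear homeomorphism of $S^1$ with dyadic breakpoints and dyadic slopes, i.e.\ an element of $T$.

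That $\Phi$ is a homomorphism is immediate from its definition, and injectivity follows from the faithfulness of the $\PPSL(2, \mathbb Z)$-action on $\mathbb R \cup \{\infty\}$ combined with $\phi$ being a bijection. For surjectivity it suffices either to exhibit explicit preimages in $\PPSL(2, \mathbb Z)$ of the standard generators $A$, $B$, $C$ of $T$, or to reverse the previous paragraph: given $f \in T$ with dyadic breakpoints, the Farey partition $\phi^{-1}(\text{breakpoints})$ identifies each piece of $f$ with a pair of Farey intervals, and the orientation-preserving element of $\PSL(2, \mathbb Z)$ matching them glues to an element of $\PPSL(2, \mathbb Z)$ mapped by $\Phi$ to $f$.

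The main obstacle is the combinatorial lemma relating the $\PSL(2, \mathbb Z)$-action on Farey intervals to the action of $T$ on dyadic intervals. The cleanest route is to reduce to the standard generators $S(x) = -1/x$ and $R(x) = x + 1$ of $\PSL(2, \mathbb Z)$, trace the induced tree automorphisms explicitly on a few levels, and check that $\phi$ conjugates each to an affine map with dyadic slope on each piece. Extra care is needed near the preimage of $\infty$ under $\phi$, where the local picture differs from the generic interior case and where the mod-$\mathbb Z$ identification on $S^1$ enters.
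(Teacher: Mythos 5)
Your construction of $\phi$ via the Farey tessellation---matching Farey mediants to dyadic midpoints through the dual binary tree---is exactly the map $\psi$ (and hence $\phi$) that the paper describes following the theorem statement, and your verification that conjugation carries $\PPSL(2,\mathbb Z)$ onto $T$ is the standard argument from the references (Imbert, Fossas) that the paper cites in lieu of a proof. The proposal is correct and takes essentially the same approach, with the combinatorial lemma you isolate (an element of $\PSL(2,\mathbb Z)$ taking one Farey interval to another induces the canonical subtree isomorphism, hence an affine map with dyadic slope after conjugation) being precisely the crux that those references establish.
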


As mentioned, Thurston's construction of the homeomorphism,
see~\cite[\S7]{CFP96}, is different than the one in the theorem above, which
is briefly explained in~\cite{Imb97}.  More details are given by Fossas in the
proof of Theorem~2.2 in~\cite{Fos11}.  More precisely, one first defines an
order preserving homeomorphism~$\psi$ of $[-\infty,\infty]$ onto $[-\frac 12,
\frac 12]$, and next obtains $\phi \colon \mathbb R\cup \{\infty\} \to \mathbb
R/ \mathbb Z$ from~$\psi$ by identifying the endpoints of each of the two
intervals, that is, $\phi(x) = \psi(x) + \mathbb Z$, for all $x \in \mathbb
R$, and $\phi(\infty) = \frac 12 + \mathbb Z$.  The map~$\psi$ is denoted
by~``$?$'' in~\cite{Fos11}.  It is closely related to (but different from) the
Minkowski question mark function, as explained in Remark~\ref{questionmark}
below.  To construct $\psi$, one writes each element in $\mathbb Q \cup
\nobreak \{\pm \infty\}$ as a reduced fraction $\frac  pq$ with $p,q \in
\mathbb Z$ and $q \geq 0$, using the convention that
\begin{center}
    $-\infty = \frac {-1} 0$ \qquad and \qquad $\infty = \frac 10$.
    \end{center}
Two reduced fractions $\frac pq$ and $\frac rs$ (possibly $\frac {-1}0$ and
$\frac 10$) are called consecutive Farey numbers if $|ps - rq| = 1$.  Given
two consecutive Farey numbers one defines
\begin{equation*}
    \frac pq \oplus \frac rs = \frac {p+r} {q+s}.
    \end{equation*}
Let now $\psi$ be defined on $\mathbb Q \cup \nobreak \{\pm \infty\}$ by
letting $\psi(-\infty) = -\frac 12$, $\psi(0) = 0$, $\psi(\infty) = \frac 12$,
and then recursively let $\psi(\frac pq \oplus \frac rs) = \frac 12\big(
\psi(\frac pq) + \psi(\frac rs) \big)$, whenever $\frac pq$ and $\frac rs$ are
consecutive Farey numbers.  As explained in~\cite{Fos11}, this map is
well-defined, and it is a strictly increasing bijection of $\mathbb Q \cup
\nobreak \{\pm \infty\}$ onto $\mathbb Z[\frac 12] \cap \nobreak [-\frac 12,
\frac 12]$.  Therefore, it has a unique extension to a strictly increasing
homeomorphism, also called $\psi$, of $[-\infty, \infty]$ onto $[-\frac 12,
\frac 12]$.

\begin{remark}
                                                          \label{questionmark}
    Let $?\colon [0,1] \to [0,1]$ denote the Minkowski question mark function
    (see, for example,~\cite{Salem43}).  Then $?$ is constructed recursively
    in the same way as $\psi$, but on the interval~$[0,1]$.  However, one
    starts with the assignments
    \begin{center}
        $?(0) = 0$ \quad and \quad $?(1) = 1$,
        \end{center}
    while in the case of $\psi$ (on the interval $[0,1]$) we have
    \begin{center}
        $\psi(0) = 0$ \quad and \quad $\psi(1) = \frac 14$.
        \end{center}
    Therefore $\psi(x) = \frac 14 ?(x)$, for $0 \leq x \leq 1$.  Moreover, it
    is not hard to check that $\psi$ satisfies the following symmetries
        $\psi(-x) = -\psi(x)$ and
        $\psi(\tfrac 1x) = \tfrac 12 - \psi(x)$,
    for $x \in [0,\infty]$.  Hence, $\psi$ can be expressed in terms of the
    Minkowski question mark function by
    \begin{equation*}
        \psi(x) = \begin{cases}
            -\frac 12 + \frac 14?(-\frac 1x) & x\in [-\infty, -1] \\
            -\frac 14?(-x) & x\in [-1, 0]\\
            \frac 14?(x) & x\in [0, 1]\\
            \frac 12 - \frac 14?(\frac 1x) & x\in [1, \infty]
            \end{cases}.
        \end{equation*}

    Thurston's version of Theorem~\ref{thm:thurston} states that $T$ is
    isomorphic to the group of homeomorphisms of~$[0,1]$ with the endpoints
    identified, which are piecewise in $\PSL(2, \mathbb Z)$, have only
    finitely many breakpoints, all of which being in~$\mathbb Q \cap [0,1]$.
    This isomorphism is realized by conjugating by the Minkowski question mark
    function $?$, instead of $\psi$.
    \end{remark}

It is well-known (see~\cite[Example~1.5.3]{SerT}) that $\PSL(2, \mathbb Z)$ is
isomorphic to $\mathbb Z_2 \ast \mathbb Z_3 = \langle a,b \mid a^2 = b^3 = \I
\rangle$, with generators
\begin{equation*}
    a = \begin{bmatrix} 0 & -1 \\ 1 & 0\end{bmatrix} \{\pm\I\}
    \qquad \text{and} \qquad
    b = \begin{bmatrix} 0 & -1 \\ 1 & 1\end{bmatrix} \{\pm\I\}.
    \end{equation*}
Note that $\PSL(2, \mathbb Z)$ is a subgroup of $\PPSL(2, \mathbb Z)$.  Hence
by~\cite[Remark~2.3]{Fos11}, we have the following result.

\begin{proposition}
    Let $\Phi \colon \PPSL(2, \mathbb Z) \to T$ be the isomorphism from
    Theorem~\ref{thm:thurston}, and let $\Lambda$ denote the
    subgroup~$\Phi(\PSL(2, \mathbb Z))$ of $T$.  Then
    \begin{equation*}
        D^2 = CA = \Phi(a) \qquad \text{and} \qquad C = \Phi(b)
        \end{equation*}
    are free generators of~$\Lambda \cong \mathbb Z_2 \ast \mathbb Z_3$, of
    order~2 and~3, respectively.
    \end{proposition}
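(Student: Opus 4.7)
The plan is to verify three identities in $T$, namely $\Phi(a) = D^2$, $D^2 = CA$, and $\Phi(b) = C$. Once these are in hand, the free-product statement is automatic: the restriction of $\Phi$ to $\PSL(2,\mathbb Z)$ is a group isomorphism onto $\Lambda$, and $\PSL(2,\mathbb Z) = \langle a, b \mid a^2 = b^3 = \I\rangle \cong \mathbb Z_2 * \mathbb Z_3$ with $a$ and $b$ as free generators of orders $2$ and $3$, so the same holds for $\Phi(a) = D^2$ and $\Phi(b) = C$ in $\Lambda$.

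To check $D^2 = CA$, I would evaluate both sides as piecewise-linear bijections of $[0,1)$. From the defining formula for $D$ in the introduction, a short case analysis gives $D^2(x) = x + \tfrac 12 \pmod 1$; that is, $D^2$ is the order-two rotation of $S^1$. A parallel piecewise computation with the standard formulas for $A$ and $C$ from~\cite{CFP96} yields the same map, giving $D^2 = CA$.

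For $\Phi(a) = D^2$, since $a$ acts as $x \mapsto -1/x$, the task reduces to verifying $\phi(-1/x) = \phi(x) + \tfrac 12 \pmod 1$ for every $x \in \mathbb R \cup \{\infty\}$. This follows from the symmetries $\psi(-x) = -\psi(x)$ and $\psi(1/x) = \tfrac 12 - \psi(x)$ for $x \geq 0$ recorded in Remark~\ref{questionmark}, combined with $\psi(0) = 0$ and $\psi(\infty) = \tfrac 12$, by a short sign-of-$x$ case split (with the degenerate cases $x = 0$ and $x = \infty$ handled separately).

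The main obstacle is the identity $\Phi(b) = C$. Here $b(x) = -1/(x+1)$ has order $3$ in $\PSL(2,\mathbb Z)$, and the conjugate $\phi\circ b\circ\phi^{-1}$ must be identified with $C$. I would first locate the breakpoints of $\phi\circ b\circ\phi^{-1}$ in $S^1$ — arising from the images under $\phi$ of $\infty$ and its preimage $-1$ under $b$, where the Möbius pieces get rejoined under the identification of endpoints — and then verify on each resulting subinterval that $\phi\circ b\circ\phi^{-1}$ coincides with the corresponding affine piece of $C$. Because both maps are rigid elements of $T$ with only a small finite number of breakpoints, agreement at the breakpoints is enough to conclude equality everywhere, so the check reduces to evaluating $\psi$ at a handful of rationals via the Farey recursion.
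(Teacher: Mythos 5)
Your overall strategy---verify the three identities directly and then transport the free-product structure through the isomorphism $\Phi$---is sound, and it is genuinely different from what the paper does: the paper gives no computation at all, deducing the proposition from Remark~2.3 of Fossas~\cite{Fos11}, which rests on the explicit construction of~$\psi$. A self-contained verification along your lines is legitimate, and your treatment of $\Phi(a)$ via the symmetries $\psi(-x)=-\psi(x)$ and $\psi(\tfrac 1x)=\tfrac12-\psi(x)$ is exactly the right way to obtain $\Phi(a)(x)=\phi(x)+\tfrac12 \pmod 1 = D^2(x)$.

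Two concrete problems remain. First, the identity $D^2=CA$ cannot be confirmed by the computation you describe: with the standard formulas for $A$ and $C$ from~\cite{CFP96} and ordinary composition one finds $(C\circ A)(0)=C(0)=\tfrac34\neq\tfrac12=D^2(0)$, whereas $A\circ C=D^2$ does hold, consistently with $A=D^2C^2$ from the introduction (which gives $AC=D^2C^3=D^2$); having both $CA=D^2$ and $AC=D^2$ would force the alleged free generators $C$ and $D^2$ to commute. So this is presumably a typo in the statement, but your assertion that the ``parallel piecewise computation \dots yields the same map, giving $D^2=CA$'' claims something the computation will refute; you should either flag the typo or record $D^2=AC$. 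Second, and more seriously, your reduction of $\Phi(b)=C$ to ``agreement at the breakpoints'' is not justified. The piecewise-linear breakpoints of $\phi\circ b\circ\phi^{-1}$ are not located at $\phi(\infty)$ and $\phi(b^{-1}(\infty))$: the map $b$ has no breakpoints at all as a piecewise-M\"{o}bius map, yet $C$ has three as a piecewise-linear map, because $\phi$ is a singular homeomorphism that converts a single M\"{o}bius piece into several affine pieces. Moreover, an element of $T$ is not determined by its values at finitely many points (all of $F$ fixes $0$), so without first establishing that $\phi\circ b\circ\phi^{-1}$ is affine off $\{0,\tfrac12,\tfrac34\}$---which is essentially the whole content of the claim---checking finitely many values proves nothing. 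The standard fix is the one implicit in the construction of~$\psi$: show that $\phi\circ b$ and $C\circ\phi$ agree on all of $\mathbb Q\cup\{\infty\}$ by induction over Farey mediants, using that $b\in\PSL(2,\mathbb Z)$ sends the mediant of a consecutive Farey pair to the mediant of the images, while $C$, being affine with dyadic breakpoints and slopes that are powers of~$2$, sends midpoints of the corresponding dyadic pairs to midpoints; density and continuity then finish the argument.
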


We will also need the following well-known (and easy to prove) fact about the
action of the group~$\PSL(2,\mathbb Z)$ on~$\mathbb R \cup \nobreak
\{\infty\}$ by M\"{o}bius transformations.

\begin{proposition}
                                                            \label{Q:uniquely}
    An element~$f \in \PSL(2,\mathbb Z)$ is uniquely determined by its value
    on two distinct points in~$\mathbb Q \cup \nobreak \{\infty\}$.
    \end{proposition}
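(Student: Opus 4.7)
The plan is to reduce the statement to showing that if $h \in \PSL(2,\mathbb{Z})$ fixes two distinct points of $\mathbb{Q} \cup \{\infty\}$, then $h = \I$. Indeed, if two elements $f, g \in \PSL(2,\mathbb{Z})$ agree on the distinct points $x_1, x_2 \in \mathbb{Q} \cup \{\infty\}$, then $h := g^{-1} f$ fixes both $x_1$ and $x_2$, and I want to conclude $f = g$. So I would pick a representative $h = \left[\begin{smallmatrix} a & b \\ c & d \end{smallmatrix}\right]\{\pm\I\}$ with $ad - bc = 1$ and split into cases according to whether $x_2 = \infty$ (after relabelling) and whether the lower-left entry $c$ vanishes.

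If either $x_1$ or $x_2$ equals $\infty$, then $h(\infty) = \infty$ forces $c = 0$, so $ad = 1$ with $a,d \in \mathbb{Z}$ and hence $a = d = \pm \I$; passing to $\PSL$, $h$ acts as $x \mapsto x + b$, and fixing a second (finite, rational) point then forces $b = 0$, giving $h = \I$. The same conclusion applies in the subcase where both fixed points are finite rationals but $c = 0$.

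The main case is when both $x_1, x_2 \in \mathbb{Q}$ are finite and $c \neq 0$. A fixed point of $h$ then satisfies the quadratic $cx^2 + (d-a)x - b = 0$, whose discriminant, using $bc = ad - 1$, simplifies to
\begin{equation*}
    (d-a)^2 + 4bc = (a+d)^2 - 4 = \operatorname{tr}(h)^2 - 4.
\end{equation*}
For two distinct \emph{rational} roots I would need this to be a positive perfect square $k^2$ with $k \in \mathbb{Z}_{>0}$, whence $(a+d-k)(a+d+k) = 4$; a quick inspection of the factorisations of $4$ in $\mathbb{Z}$ shows the only solutions have $k = 0$, contradicting the positivity. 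Hence $c \neq 0$ is impossible, and we are back in the previous case. This rational-roots arithmetic is the step I would be most careful about, as it is the one place where the hypothesis that the two fixed points lie in $\mathbb{Q} \cup \{\infty\}$ (rather than being arbitrary real points) is genuinely used.
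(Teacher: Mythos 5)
Your argument is correct and complete. Note that the paper itself offers no proof of this proposition at all: it is introduced as a ``well-known (and easy to prove) fact'' and simply cited in the later arguments, so there is nothing in the source to compare your route against. What you supply is the standard fixed-point argument, and every step checks out: reducing to $h:=g^{-1}f$ fixing two points; the case $c=0$ giving a translation, which can fix a finite point only if it is trivial; and in the case $c\neq 0$ the discriminant computation $(d-a)^2+4bc=(a+d)^2-4$, which can be a positive perfect square only if $(a+d-k)(a+d+k)=4$ with both factors of the same parity, forcing $k=0$. This correctly isolates where the rationality of the two points is used (the fixed points of a hyperbolic element of $\PSL(2,\mathbb Z)$ are quadratic irrationals, a parabolic element has only one fixed point, and an elliptic element has none in $\mathbb R\cup\{\infty\}$). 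The only blemish is typographical: you write $a=d=\pm\I$ where you mean $a=d=\pm 1$.
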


\begin{remark}
                                                       \label{dyadic:uniquely}
    Note that we can translate the above result to one about~$\Lambda$, by
    using the homeomorphism $\phi$.  Indeed, $\phi$ restricts to a bijection
    from~$\mathbb Q \cup \nobreak \{\infty\}$ to~$[0,1) \cap \nobreak \mathbb
    Z[\frac 12]$, so an element in~$\Lambda$ is uniquely determined by its
    value on two points in~$[0,1) \cap \nobreak \mathbb Z[\frac 12]$.
    \end{remark}

%%%%%%%%%%%%%%%%%%%%%%%%%%%%%%%%%%%%%%%%%%%%%%%%%%%%%%%%%%%%%%%%%%%%%%%%%%%%
%%
%%  NON-INNER AMENABILITY OF T AND V
%%
\section{Non-inner amenability of \texorpdfstring{$T$}{T} and
\texorpdfstring{$V$}{V}}
                                                               \label{sec:non}

In this section we prove that the Thompson groups $T$ and $V$ are not inner
amenable.  This answers a question that Ionut Chifan raised at a conference in
Alba-Iulia in~2013.  We start by recalling the fundamentals of inner
amenability.

The \emph{reduced group $C^\ast$\mbox-algebra} associated to a discrete group
$G$, denoted by~$C^\ast_r(G)$, is the $C^\ast$\mbox-algebra generated by the
image of the left regular representation of $G$, $\lambda \colon G \to
B(\ell^2(G))$, while the \emph{group von Neumann algebra} associated to $G$,
denoted by $\LL G$, is the weak operator closure of $C^\ast_r(G)$ inside
$B(\ell^2(G))$.  The group von Neumann algebra is a type
$\mathrm{II}_1$\mbox-factor if and only if the group is ICC.

The von Neumann algebra $\LL G$ of a discrete ICC group $G$ is said to have
\emph{property~$\Gamma$}, if there exists a net $(u_i)_{i \in I}$ of unitaries
in $\LL G$ such that $\langle u_i \delta_e \mid \delta_e \rangle = 0$, for all
$i \in I$, and $\lim_{i \in I} \| (u_i x - x u_i)\delta_e \| = 0$, for all
$x\in \LL G$.  Property~$\Gamma$ can be defined for general finite factors,
but we are only concerned here with the case where the von Neumann algebra is
a group von Neumann algebra.

Recall that a discrete group~$G$ is said to be \emph{inner amenable}, if there
exists a state~$m$ on~$\ell^\infty(G)$ such that $m(\delta_e) = 0$ and $m(f) =
m(\alpha(g)f)$, for all $f \in \ell^\infty(G)$ and $g \in G$, where
$(\alpha(g)f)(h) = f(g^{-1} h g)$, for all $h \in G$.  Inner amenability was
introduced by Effros in 1975 in an attempt to characterize property~$\Gamma$
for group von Neumann algebras in terms of a purely group theoretic property;
see~\cite{Eff}.  Therein he proved the following theorem.

\begin{theorem}[Effros]
                                                            \label{thm:effros}
    For a discrete ICC group~$G$, the following statements are equivalent:
    \begin{enumerate}[label=(\roman{enumi})]

        \item
        $G$ is inner amenable.

        \item
        There exists a net~$(\eta_i)_{i\in I}$ of unit vectors in~$\ell^1(G)$
        satisfying $\eta_i(e) = 0$, for all $i \in I$, and $\lim_{i\in I} \|
        \alpha(g) \eta_i - \eta_i \|_1 = 0$, for all $g \in G$.

        \item
        There exists a net~$(\xi_i)_{i\in I}$ of unit vectors in~$\ell^2(G)$
        satisfying $\xi_i(e) = 0$, for all $i \in I$, and $\lim_{i\in I} \|
        \alpha(g) \xi_i - \xi_i \|_2 = 0$, for all $g \in G$.

        \end{enumerate}
    Moreover, if the group von Neumann algebra of~$G$ has property~$\Gamma$,
    then $G$ is inner amenable.
    \end{theorem}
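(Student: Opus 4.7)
The plan is to prove (i)$\Leftrightarrow$(ii)$\Leftrightarrow$(iii) through the cyclic chain (i)$\Rightarrow$(ii)$\Rightarrow$(iii)$\Rightarrow$(i), in direct parallel with the Day--Reiter characterization of amenability but with the conjugation action $\alpha$ replacing left translation, and to treat the concluding statement about property~$\Gamma$ separately.

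For (i)$\Rightarrow$(ii), I would embed the set
\[
    S = \{ \eta \in \ell^1(G) : \eta \geq 0, \; \|\eta\|_1 = 1, \; \eta(e) = 0 \}
\]
into the state space of $\ell^\infty(G)$ via $\eta \mapsto (f \mapsto \sum_h \eta(h) f(h))$, and note that the invariant mean $m$ from~(i) belongs to the weak-$\ast$ closure of $S$. Weak-$\ast$ approximation then produces, for each finite $F \subseteq G$ and each $\varepsilon > 0$, some $\eta \in S$ with $\alpha(g)\eta - \eta$ close to $0$ weakly in $\ell^1(G)$ for every $g \in F$. Day's convexity trick, applied to the product space $\bigoplus_{g \in F} \ell^1(G)$ and exploiting that the weak and norm closures of a convex set coincide, upgrades this to a net $(\eta_i)$ in $S$ with $\|\alpha(g)\eta_i - \eta_i\|_1 \to 0$ for every $g \in G$.

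For (ii)$\Rightarrow$(iii), I would apply the Powers--Størmer-type inequality $(a-b)^2 \leq |a^2 - b^2|$ for $a,b \geq 0$ pointwise with $\xi_i = \eta_i^{1/2}$; the equivariance of the square root under $\alpha$ yields unit vectors $\xi_i \in \ell^2(G)$ with $\xi_i(e) = 0$ and $\|\alpha(g)\xi_i - \xi_i\|_2^2 \leq \|\alpha(g)\eta_i - \eta_i\|_1$. For (iii)$\Rightarrow$(i), I would define states $m_i$ on $\ell^\infty(G)$ by $m_i(f) = \sum_h f(h)|\xi_i(h)|^2$ and take any weak-$\ast$ cluster point $m$; the Cauchy--Schwarz bound
\[
    |m_i(\alpha(g)f) - m_i(f)| \leq \|f\|_\infty \bigl(\|\alpha(g)\xi_i\|_2 + \|\xi_i\|_2\bigr)\|\alpha(g)\xi_i - \xi_i\|_2
\]
forces $\alpha$-invariance of $m$, and $\xi_i(e) = 0$ delivers $m(\delta_e) = 0$.

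Finally, for the statement that property~$\Gamma$ implies inner amenability, given a net $(u_i)$ of unitaries in $\LL G$ witnessing property~$\Gamma$, I would set $\xi_i = u_i \delta_e \in \ell^2(G)$, observe that $\|\xi_i\|_2 = 1$ and $\xi_i(e) = \langle u_i \delta_e, \delta_e \rangle = 0$, and verify the coordinate identity $(\lambda_g u_i \lambda_g^{-1})\delta_e = \alpha(g)\xi_i$. Combined with the standard fact that $\|T\|_2 = \|T\delta_e\|_2$ for $T \in \LL G$, the $\Gamma$-condition applied to $x = \lambda_g^{-1}$ yields $\|\alpha(g)\xi_i - \xi_i\|_2 \to 0$, so (iii) holds and $G$ is inner amenable. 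I expect the main obstacle to be the (i)$\Rightarrow$(ii) step, where one must identify the right convex set and invoke Day's trick carefully to convert a finitely additive invariant mean into norm-approximately invariant $\ell^1$ vectors.
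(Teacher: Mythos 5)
The paper does not prove this theorem at all; it is quoted from Effros~\cite{Eff} and used as a black box, so there is no in-paper argument to compare against. Your proof is the standard Day--Reiter-style argument transplanted to the conjugation action, and it is essentially correct: the weak-$\ast$/weak-$\ell^1$ identification plus Day's convexity trick for (i)$\Rightarrow$(ii), the inequality $(a-b)^2 \leq |a^2-b^2|$ for (ii)$\Rightarrow$(iii), a weak-$\ast$ cluster point of the states $f \mapsto \sum_h f(h)|\xi_i(h)|^2$ for (iii)$\Rightarrow$(i), and for the $\Gamma$ statement the identity $\lambda_g u_i \lambda_g^{-1}\delta_e = \alpha(g)(u_i\delta_e)$, which holds because $u_i$ commutes with the right regular representation so that $u_i\delta_{g^{-1}} = \rho_g u_i\delta_e$. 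Two small points you gloss over but should record: in (i)$\Rightarrow$(ii) you must check that the invariant mean $m$ with $m(\delta_e)=0$ actually lies in the weak-$\ast$ closure of your set $S$ (approximate $m$ by probability densities $\eta_j$, note $\eta_j(e)\to 0$, and renormalize $(\eta_j - \eta_j(e)\delta_e)/(1-\eta_j(e))$); and in (ii)$\Rightarrow$(iii) statement (ii) does not require $\eta_i \geq 0$, so before taking square roots you should replace $\eta_i$ by $|\eta_i|$, using $\alpha(g)|\eta| = |\alpha(g)\eta|$ and $\||\eta|-|\eta'|\|_1 \leq \|\eta-\eta'\|_1$. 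With those routine repairs the argument is complete.
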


Effros conjectured that inner amenability of a discrete ICC group~$G$ was, in
fact, equivalent to property~$\Gamma$ of~$\LL G$.  In~2012, Vaes~\cite{Vae12}
provided a counterexample.

Recall that an \emph{action} of a discrete group~$G$ on a set~$\mathfrak X$ is
a homomorphism from~$G$ to the set of permutations of~$\mathfrak X$, and that
such an action is said to be \emph{amenable} if there exists a finitely
additive probability measure on~$\mathfrak X$ which is invariant under the
action.  It straightforward to check that a discrete group~$G$ is inner
amenable if and only if the conjugation action of the group on $G\setminus
\{e\}$ is amenable.

A key result for our proof of non-inner amenability of~$T$ and~$V$ is the
following result due to Rosenblatt~\cite{Ros81}.

\begin{proposition}[Rosenblatt]
                                                        \label{thm:rosenblatt}
    Let $G$ be a non-amenable discrete group acting on a set~$\mathfrak X$.
    If the stabilizer of each point is amenable, then the action of $G$ is
    itself non-amenable.
    \end{proposition}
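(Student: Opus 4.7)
The plan is to argue by contrapositive: assuming that the action of $G$ on $\mathfrak X$ is amenable and that each stabilizer is amenable, I will construct a left-invariant mean on $\ell^\infty(G)$, which contradicts non-amenability of~$G$. Fix a $G$-invariant state $\mu$ on~$\ell^\infty(\mathfrak X)$ provided by amenability of the action, choose a set of orbit representatives $\{x_i\}_{i \in I}$ so that $\mathfrak X = \bigsqcup_i G \cdot x_i$, and for each~$i$ choose a left-invariant mean $m_i$ on $\ell^\infty(G_{x_i})$, where $G_{x_i}$ denotes the (amenable) stabilizer of~$x_i$.

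The key construction is an averaging map $\ell^\infty(G) \to \ell^\infty(\mathfrak X)$. For $f \in \ell^\infty(G)$ and $x \in G\cdot x_i$, I would pick any $g \in G$ with $g \cdot x_i = x$ and set
\[
\tilde f(x) = m_i\bigl(h \mapsto f(gh)\bigr).
\]
For this to be well-defined, note that if $g'$ is another element with $g' \cdot x_i = x$, then $g^{-1} g' \in G_{x_i}$, so left-invariance of~$m_i$ forces the two candidate values to coincide. The proposed mean on~$G$ is then $m(f) := \mu(\tilde f)$, which is automatically a state on $\ell^\infty(G)$ since both $\mu$ and each~$m_i$ are.

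The remaining task is to verify left-invariance of~$m$. Writing $(L_{g_0} f)(h) = f(g_0^{-1} h)$, the identity $(g_0^{-1} g) \cdot x_i = g_0^{-1} \cdot x$ lets me unfold the definition to obtain $\widetilde{L_{g_0} f}(x) = \tilde f(g_0^{-1} \cdot x)$, and then $G$-invariance of~$\mu$ on~$\mathfrak X$ yields $m(L_{g_0} f) = m(f)$, as desired.

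The only genuinely delicate point is the well-definedness of~$\tilde f$, and this is precisely where amenability of the stabilizers enters the argument: it is what allows each coset $g G_{x_i}$ to be collapsed to a single unambiguous value. Everything else is a mechanical transport of invariance from the orbit space and from the individual stabilizers back to~$G$, so I do not anticipate any serious obstacle beyond getting the bookkeeping straight.
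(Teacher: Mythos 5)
Your argument is correct and complete. Note that the paper itself gives no proof of this proposition; it simply cites Rosenblatt's paper, so there is nothing internal to compare against. What you have written is the standard ``amenability is inherited from amenable actions with amenable stabilizers'' argument in contrapositive form: average first over each stabilizer via the means $m_i$ to push a function on $G$ down to a function on $\mathfrak X$, then integrate against the invariant mean $\mu$ on $\ell^\infty(\mathfrak X)$. The two points that genuinely need checking are exactly the ones you isolate: well-definedness of $\tilde f$ (where left-invariance of $m_i$ handles the ambiguity $g' = gk$ with $k \in G_{x_i}$, since $h \mapsto f(gkh)$ is a left translate of $h \mapsto f(gh)$ in $\ell^\infty(G_{x_i})$) and the identity $\widetilde{L_{g_0} f}(x) = \tilde f(g_0^{-1}\cdot x)$, which transfers invariance from $\mu$ to $m$. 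Both are verified correctly, and positivity, unitality and boundedness of $f \mapsto \tilde f$ are immediate, so $m$ is indeed a left-invariant mean on $\ell^\infty(G)$, contradicting non-amenability of $G$.
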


From this we directly get the following sufficient condition for non-inner
amenability.

\begin{corollary}
                                                               \label{cor:nia}
    Let $G$ be discrete a group.  If $G$ has a non-amenable subgroup~$H$ such
    that $\{ g \in H : ghg^{-1} = h \}$ is amenable, for each~$h \in G
    \setminus \{e\}$, then $G$ is non-inner amenable.
    \end{corollary}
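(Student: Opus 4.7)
The plan is to prove this by contradiction, combining Rosenblatt's proposition with the characterization of inner amenability as amenability of the conjugation action, which is recalled just above the corollary in the text.

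Suppose $G$ is inner amenable. Then the conjugation action of $G$ on $G \setminus \{e\}$ is amenable, so there is a finitely additive probability measure $\mu$ on $G \setminus \{e\}$ which is invariant under conjugation by every element of $G$. First I would observe that, restricting attention to the subgroup $H \subseteq G$, the measure $\mu$ is in particular invariant under conjugation by elements of $H$. Hence the conjugation action of $H$ on the set $\mathfrak{X} = G \setminus \{e\}$ is amenable.

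Next I would compute the stabilizers of this $H$-action: for $h \in \mathfrak{X} = G \setminus \{e\}$, the stabilizer under the conjugation action of $H$ is exactly
\begin{equation*}
    \operatorname{Stab}_H(h) = \{ g \in H : g h g^{-1} = h \},
\end{equation*}
which is amenable by hypothesis. Since $H$ is non-amenable and all stabilizers of the $H$-action on $\mathfrak{X}$ are amenable, Proposition~\ref{thm:rosenblatt} (Rosenblatt) forces the $H$-action on $\mathfrak{X}$ to be non-amenable. This contradicts the amenability of the action established above, so $G$ cannot be inner amenable.

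There is no real obstacle here: the whole argument is a straightforward chain of implications that glues together the characterization of inner amenability via the conjugation action with Rosenblatt's theorem. The only tiny point worth mentioning explicitly is the purely formal fact that if an action of $G$ on $\mathfrak X$ is amenable then so is the restriction to any subgroup $H \leq G$, which follows immediately because any $G$-invariant mean on $\ell^\infty(\mathfrak X)$ is in particular $H$-invariant.
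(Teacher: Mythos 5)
Your proof is correct and is precisely the argument the paper intends: the corollary is stated there as following ``directly'' from Rosenblatt's proposition via the characterization of inner amenability as amenability of the conjugation action on $G \setminus \{e\}$, and your restriction-to-$H$ step together with the identification of the stabilizers is exactly how that deduction goes. Nothing is missing.
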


Notice that, in the corollary above, any subgroup of~$G$ containing~$H$ will
automatically fail to be inner amenable, as well.

\begin{theorem}
    The Thompson groups $T$ and $V$ are not inner amenable.
    \end{theorem}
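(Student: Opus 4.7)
The plan is to verify the hypotheses of Corollary \ref{cor:nia} with the non-amenable subgroup $H = \Lambda = \Phi(\PSL(2,\mathbb Z))$, which is isomorphic to $\mathbb Z_2 \ast \mathbb Z_3$ and sits inside $T$, hence inside $V$. It therefore suffices to prove that for every $h \in V \setminus \{e\}$ the centralizer $C_\Lambda(h) = \{g \in \Lambda : gh = hg\}$ is amenable; this yields non-inner amenability for both $T$ and $V$ at once. The proof naturally splits according to whether $h$ is continuous on the circle $S^1 \cong \mathbb R \cup \{\infty\}$.

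Suppose first that $h \in T$ and pass through the isomorphism $\Phi$ to work in the $\PPSL(2,\mathbb Z)$ picture. If $h$ has no breakpoints then $h \in \PSL(2,\mathbb Z)$, and the centralizer of a non-trivial element in $\PSL(2,\mathbb Z) \cong \mathbb Z_2 \ast \mathbb Z_3$ is cyclic — a standard fact for free products of finite cyclic groups — hence amenable. Otherwise $h$ has $k \geq 2$ breakpoints $x_1, \dots, x_k \in \mathbb Q \cup \{\infty\}$: the value $k = 1$ is excluded because a lone breakpoint on the circle $\mathbb R \cup \{\infty\}$ would force $h$ to coincide with a single M\"obius transformation on the connected complement and extend smoothly through the purported breakpoint. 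Any $g \in \PSL(2,\mathbb Z)$ is globally smooth, so $ghg^{-1}$ has breakpoint set $g(\{x_1,\dots,x_k\})$, and the commutation relation forces $g$ to permute the $x_i$. Proposition \ref{Q:uniquely} then bounds $|C_\Lambda(h)| \leq k(k-1)$, so the centralizer is finite.

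Now suppose $h \in V \setminus T$. Then $h$ has at least one discontinuity on $S^1$, and in fact at least two: a single discontinuity would restrict $h$ to a monotone continuous bijection between open intervals on the complement, which is either orientation-preserving (so $h$ extends continuously across the discontinuity — a contradiction) or orientation-reversing (impossible since every local slope of $h$ is a positive power of $2$). The finite discontinuity set $D_h$ is contained in $[0,1) \cap \mathbb Z[\tfrac 12]$, and since each $g \in \Lambda$ is a homeomorphism of $S^1$, the discontinuity set of $ghg^{-1}$ equals $g(D_h)$; so commutation forces $g$ to permute $D_h$, and by Remark \ref{dyadic:uniquely} the element $g$ is determined by its values on any two points of $D_h$. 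Hence $C_\Lambda(h)$ is again finite, and Corollary \ref{cor:nia} applies to yield non-inner amenability of both $T$ and $V$.

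The main obstacle is the pair of ``at least two breakpoints/discontinuities'' assertions: each is a structural rigidity statement (analyticity/connectedness for $\PPSL$, local orientation for $V$), and together they are what prevents the centralizer from being some mysterious continuous family rather than a finite or cyclic set. Once these rigidity points are in place, Proposition \ref{Q:uniquely} and Remark \ref{dyadic:uniquely} reduce the centralizer to a finite collection of permutations of a finite set, and the residual case $h \in \PSL(2,\mathbb Z)$ is handled by the standard classification of centralizers in a free product of finite cyclic groups.
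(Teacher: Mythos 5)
Your proposal is correct and follows essentially the same route as the paper: reduce to Corollary~\ref{cor:nia} with $H = \Lambda \cong \PSL(2,\mathbb Z)$, then split into the cases of a nontrivial element of $\Lambda$ (cyclic centralizer), an element of $T\setminus\Lambda$ (permutation of its $\geq 2$ breakpoints plus Proposition~\ref{Q:uniquely}), and an element of $V\setminus T$ (permutation of its $\geq 2$ discontinuities plus Remark~\ref{dyadic:uniquely}). Your connectedness and orientation arguments merely supply details for the two ``at least two breakpoints/discontinuities'' claims that the paper asserts as easy, so there is nothing to add.
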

\begin{proof}
    Recall from Section~\ref{sec:iso} that $\Lambda$ denotes the subgroup
    of~$T$ generated by~$C$ and~$D^2$, and that conjugation by the map~$\phi$
    from Theorem~\ref{thm:thurston} restricts to an isomorphisms
    between~$\Lambda$ and~$\PSL(2, \mathbb Z)$.  Clearly $\Lambda$ is not
    amenable, so, by Corollary~\ref{cor:nia}, it suffices to prove that $\{g
    \in \Lambda : gf = fg\}$ is amenable, for all~$f \in V \setminus \{e\}$.
    This will imply that both~$T$ and~$V$ are non-inner amenable.  We will
    consider separately the cases where $f \in \Lambda \setminus \{e\}$, $f
    \in T \setminus \Lambda$ and $f \in V \setminus T$, respectively.  Fix~$f
    \in V \setminus \{e\}$ and let us denote the subgroup $\{g \in \Lambda :
    gf = fg\}$ by~$H$.

    First, suppose that $f \in \Lambda \setminus \{e\}$.  Then $H$ is the
    centralizer of~$f$ in~$\Lambda$, which is cyclic by Theorems~2.3.3
    and~2.3.5 in~\cite{Kat92}.  In particular, it is amenable.

    Now, suppose that $f \in T \setminus \Lambda$.  It is easy to see that,
    with $\tilde f$ denoting $\phi^{-1}f\phi$, we have $\phi^{-1} H \phi = \{
    h \in \PSL(2, \mathbb Z) : h \tilde f) = \tilde f h \}$.  Let us show that
    this group is amenable, since then $H$ will be amenable as well.  Since $f
    \notin \Lambda$, we know that $\tilde f \notin \PSL(2, \mathbb Z)$, and,
    in particular, $\tilde f$ has at least two breakpoints.  Let $y_1, \ldots,
    y_n$ denote these breakpoints, and note that they all are in~$\mathbb Q
    \cup \{\infty\}$.  Suppose that $h \in \PSL(2, \mathbb Z)$ with $h \tilde
    f h^{-1} = \tilde f$.  Clearly, the breakpoints of~$h \tilde f h^{-1}$ are
    $h(y_1), \ldots, h(y_n)$, so since $h \tilde f h^{-1} = \tilde f$, these
    must also be breakpoints of~$\phi^{-1}f\phi$.  In other words, $h$
    permutes the breakpoints of~$\tilde f$.  Now, because $f$ has at least two
    breakpoints, it follows from Proposition~\ref{Q:uniquely} that $h$ is
    uniquely determined by the corresponding permutation of $y_1, \ldots,
    y_n$.  Since there are only finitely many permutations of $n$~elements, we
    deduce that $\phi^{-1} H \psi$ is finite.  In particular, $H$ is also
    finite, and therefore amenable.

    Last, suppose that $f \in V \setminus T$.  Then $f$ is discontinuous, and
    since $f$ is a bijection of~$[0,1)$, it is easy to see that $f$ must have
    at least two points of discontinuity.  Since these points of discontinuity
    are all dyadic rationals and the elements of~$\Lambda$ are uniquely
    determined by their values of two dyadic rational numbers, by
    Remark~\ref{dyadic:uniquely}, we can use the same argument as the one
    for~$T \setminus \Lambda$ to conclude that $H$ is finite, since every
    element in~$\Lambda$ must permute points of discontinuity of~$f$.  This
    proves that $\{g \in \Lambda : gf = fg\}$ is amenable, for all~$f \in V
    \setminus \{e\}$.  By Corollary~\ref{cor:nia}, the conclusion follows.
    \end{proof}

The proof of the above theorem shows that, in fact, every subgroup of $V$
containing $\Lambda$ is non-inner amenable.

By the results of Effros (see Theorem~\ref{thm:effros}), it follows that
neither~$\LL T$ nor~$\LL V$ has property~$\Gamma$, and hence they are not
McDuff factors.

%%%%%%%%%%%%%%%%%%%%%%%%%%%%%%%%%%%%%%%%%%%%%%%%%%%%%%%%%%%%%%%%%%%%%%%%%%%%%%
%%
%%  SIMPLICITY OF C*_r(T) IMPLIES NON-AMENABILITY OF F
%%
\section{Simplicity of \texorpdfstring{$C_r^\ast(T)$}{C*r(T)} implies
non-amenability of \texorpdfstring{$F$}{F}}
                                                        \label{sec:simplicity}

In this section we prove that the Thompson group~$F$ is non-amenable if the
reduced group $C^\ast$\mbox-algebra of~$T$ is simple.  First, we recall the
notion of weak containment of group representations.

By a representation of a group, we always mean a unitary representation, that
is, a homomorphism from the given group to the unitary group of some Hilbert
space.  We denote the full group $C^\ast$\mbox-algebra of a discrete group~$G$
by $C^\ast(G)$.  If $\pi \colon G \to B(H)$ is a representation of~$G$ on a
Hilbert space~$H$, then it extends uniquely to a representation
of~$C^\ast(G)$.  We use the same symbol to denote this representation.  If
$\pi \colon G \to B(H)$ and $\rho \colon G \to B(K)$ are two representations
of the group~$G$, then we say that $\pi$ is \emph{weakly contained} in $\rho$
if, for every $\xi \in H$ and $\varepsilon > 0$, there exist $\eta_1, \ldots,
\eta_n \in K$ such that
\begin{align*}
    \Bigl| \langle \pi(g) \xi \mid \xi \rangle - \sum_{i=1}^n \langle \rho(g)
    \eta_i \mid \eta_i \rangle \Bigr| < \varepsilon.
    \end{align*}
This is equivalent to the fact that there exists a $\ast$\mbox-homomor\-phism
$\sigma$ from the $C^\ast$\mbox-algebra generated by $\rho(G)$ to the
$C^\ast$\mbox-algebra generated by $\pi(G)$ such that $\pi = \sigma \circ
\rho$.  We need the following well-known result about weak containment.

\begin{proposition}
                                                               \label{pro:awc}
    Let $G$ be a discrete group acting on a set $\mathfrak X$.  If all
    stabilizers of the action are amenable, then the representation of~$G$
    on~$\ell^2(\mathfrak X)$ is weakly contained in the left regular
    representation of~$G$.
    \end{proposition}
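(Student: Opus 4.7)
The plan is to decompose $\mathfrak X$ into orbits, realise each orbit summand as a quasi-regular representation $\lambda_{G/H}$ with $H$ amenable, and build matrix coefficients of $\lambda_G$ approximating those of $\lambda_{G/H}$ by means of a F\o lner net of $H$ sitting inside $\ell^2(G)$.

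First I would write $\mathfrak X = \bigsqcup_\alpha \mathfrak X_\alpha$ as a disjoint union of $G$-orbits, so that the representation $\pi$ on $\ell^2(\mathfrak X)$ becomes the direct sum $\bigoplus_\alpha \pi_\alpha$, where $\pi_\alpha$ acts on $\ell^2(\mathfrak X_\alpha)$.  Picking a basepoint $x_\alpha$ in each orbit with stabilizer $H_\alpha$ (amenable by hypothesis), the map $gH_\alpha \mapsto g x_\alpha$ identifies $\pi_\alpha$ unitarily with the quasi-regular representation $\lambda_{G/H_\alpha}$.  Any finitely supported vector in $\ell^2(\mathfrak X)$ meets only finitely many orbits, and its diagonal matrix coefficient is the sum of those of its orbit-components.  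Hence, as soon as each $\lambda_{G/H_\alpha}$ is weakly contained in $\lambda_G$, one obtains $\pi \prec \lambda_G$ by concatenating the corresponding families of approximants and invoking density of finitely supported vectors in $\ell^2(\mathfrak X)$.

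Second, I would prove the key claim: if $H \leq G$ is amenable, then $\lambda_{G/H} \prec \lambda_G$.  Take a F\o lner net $(F_i)$ of finite subsets of $H$ and set $\xi_i = |F_i|^{-1/2}\, \chi_{F_i} \in \ell^2(G)$.  For $g \in H$ one has $\langle \lambda(g)\xi_i, \xi_i \rangle = |gF_i \cap F_i|/|F_i| \to 1$ by the F\o lner property, while for $g \notin H$ the sets $gF_i \subseteq gH$ and $F_i \subseteq H$ lie in disjoint cosets of $H$, so $\langle \lambda(g)\xi_i,\xi_i\rangle = 0$.  Thus these matrix coefficients converge pointwise on $G$ to $g \mapsto \chi_H(g) = \langle \lambda_{G/H}(g)\delta_{eH}, \delta_{eH}\rangle$.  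To pass from the cyclic vector $\delta_{eH}$ to a general finitely supported $\xi = \sum_k c_k \delta_{g_k H} \in \ell^2(G/H)$, I would define $\eta_i = \sum_k c_k\, \lambda(g_k)\xi_i \in \ell^2(G)$; expanding bilinearly gives
\[
    \langle \lambda(g)\eta_i,\eta_i\rangle
    \;=\; \sum_{k,l} c_k \overline{c_l}\, \langle \lambda(g_l^{-1} g g_k)\xi_i,\xi_i\rangle,
\]
which by the previous calculation converges pointwise to $\sum_{k,l} c_k \overline{c_l}\, \chi_H(g_l^{-1} g g_k) = \langle \lambda_{G/H}(g)\xi,\xi\rangle$.

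The main obstacle is really only the F\o lner estimate in the second step, which is a standard calculation; everything else amounts to the orbit decomposition, a bilinear expansion, and the remark that weak containment only requires uniform approximation of matrix coefficients on finite subsets of $G$, so that the pointwise convergence established above suffices.
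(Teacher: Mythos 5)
Your proof is correct. Note, however, that the paper offers no proof of this proposition at all: it is stated as a well-known fact, with the reader pointed to the book of Bekka--de la Harpe--Valette for background on weak containment and amenability. So there is nothing to compare against except the standard argument, which is exactly what you have reconstructed: orbit decomposition reduces the statement to showing $\lambda_{G/H}\prec\lambda_G$ for an amenable subgroup $H$, and a F\o lner net for $H$, normalised in $\ell^2(G)$, produces diagonal matrix coefficients of $\lambda_G$ converging pointwise to $\chi_H=\langle\lambda_{G/H}(\cdot)\delta_{eH}\mid\delta_{eH}\rangle$; the bilinear expansion then handles an arbitrary finitely supported vector of $\ell^2(G/H)$, and density plus the passage to direct sums finishes the argument. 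All the individual steps check out: the disjointness of cosets gives the vanishing for $g\notin H$, the identity $\langle\lambda_{G/H}(g)\xi\mid\xi\rangle=\sum_{k,l}c_k\overline{c_l}\chi_H(g_l^{-1}gg_k)$ is what your limit produces, and pointwise convergence on $G$ is more than enough for weak containment, which only requires approximation on finite subsets. One small remark: your construction in fact approximates each matrix coefficient of $\lambda_{G/H}$ by a \emph{single} matrix coefficient of $\lambda_G$ (the case $n=1$ in the definition of weak containment), which is slightly stronger than needed and worth noticing.
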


For more about weak containment and connections to amenability see~\cite{BHV}.
The following result is a well-known characterization of amenable actions.
For a proof in the case where the action is left translation of the group on
itself, the reader may consult \cite[Theorem 2.6.8]{BO}.

\begin{proposition}
                                                             \label{ac,am,gen}
    Suppose that $G$ is a discrete group acting on a set~$\mathfrak X$, and
    let $\pi$ denote the induced representation on~$\ell^2(\mathfrak X)$.
    Then the action of~$G$ on~$\mathfrak X$ is non-amenable if and only if
    there exist $g_1, \ldots, g_n$ in~$G$ so that
    \begin{equation*}
        \Big\| \frac{1}{n} \sum_{k=1}^n \pi(g_k)\Big\| < 1.
        \end{equation*}
    In fact, if the action is non-amenable and $G$ is finitely generated, then
    $\| \frac{1}{n} \sum_{k=1}^n \pi(g_k)\| < 1$, for any set of elements
    $g_1, g_2, \ldots, g_n$ generating $G$.
    \end{proposition}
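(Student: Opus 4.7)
The plan is to reduce the statement to a Hilbert-space convexity lemma combined with the standard characterization of amenability of an action in terms of almost invariant vectors in $\ell^2(\mathfrak X)$. The first observation is that since $G$ acts by permutations of $\mathfrak X$, each $\pi(g)$ is a unitary operator on $\ell^2(\mathfrak X)$; this unitarity is what makes the convexity argument below work.

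The main technical ingredient is the following elementary lemma: for unitaries $u_1, \ldots, u_n$ on a Hilbert space $H$, one has $\bigl\| \tfrac{1}{n} \sum_{k=1}^n u_k \bigr\| = 1$ if and only if there exists a sequence $(\xi_m)$ of unit vectors in $H$ with $\|u_k \xi_m - \xi_m\| \to 0$ for every $k$. The \emph{if} direction is immediate from the triangle inequality. For the \emph{only if} direction, pick unit vectors $\xi_m$ with $\bigl\| \tfrac{1}{n} \sum_k u_k \xi_m \bigr\| \to 1$; since the $u_k \xi_m$ are themselves unit vectors and an average of $n$ unit vectors in a Hilbert space can approach norm $1$ only when the vectors are pairwise asymptotically equal (a parallelogram-identity computation), one obtains $\|u_k \xi_m - u_j \xi_m\| \to 0$ for all $k, j$, and specializing to $j$ with $u_j = 1$ (or, equivalently, replacing $\xi_m$ by $u_j^{-1} \xi_m$ which is still a unit vector) yields $\|u_k \xi_m - \xi_m\| \to 0$ for each $k$. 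This convexity step is the only place where any real work is done.

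Granting the lemma, the main equivalence follows by invoking the standard characterization of amenability of an action (the case $\mathfrak X = G$ is Theorem~2.6.8 of~\cite{BO}, and the general case is analogous): the action of $G$ on $\mathfrak X$ is amenable if and only if for every finite $F \subset G$ and every $\varepsilon > 0$ there exists a unit vector $\xi \in \ell^2(\mathfrak X)$ with $\|\pi(g) \xi - \xi\|_2 < \varepsilon$ for all $g \in F$. Combining this with the lemma, non-amenability of the action is equivalent to the existence of some finite set $\{g_1, \ldots, g_n\} \subset G$ admitting no almost invariant vector, which is equivalent to $\bigl\| \tfrac{1}{n} \sum_{k=1}^n \pi(g_k) \bigr\| < 1$.

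For the final clause, suppose $G$ is generated by $g_1, \ldots, g_n$ and that $\bigl\| \tfrac{1}{n} \sum_k \pi(g_k) \bigr\| = 1$. The lemma produces unit vectors $\xi_m \in \ell^2(\mathfrak X)$ with $\|\pi(g_k) \xi_m - \xi_m\| \to 0$ for each generator. Writing an arbitrary $h \in G$ as a word of length $\ell(h)$ in $g_1^{\pm 1}, \ldots, g_n^{\pm 1}$ and applying the triangle inequality together with unitarity of the $\pi(g_k)$ gives $\|\pi(h) \xi_m - \xi_m\| \leq \ell(h) \max_k \|\pi(g_k) \xi_m - \xi_m\| \to 0$, so almost $F$-invariant vectors exist for every finite $F \subset G$, making the action amenable. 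By contraposition, if the action is non-amenable then $\bigl\| \tfrac{1}{n} \sum_k \pi(g_k) \bigr\| < 1$ for any generating set. The only nontrivial step in the entire argument is the convexity lemma; everything else is bookkeeping on top of the standard amenability dictionary.
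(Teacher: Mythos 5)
The paper offers no proof of this proposition---it is quoted as well known, with \cite[Theorem~2.6.8]{BO} cited for the left translation action---so the only question is whether your reconstruction is sound. Your reduction to a convexity lemma is the right idea, but the lemma as you state it is false, and the error sits exactly in the step you describe as the only real work. From $\bigl\|\tfrac1n\sum_k u_k\xi_m\bigr\|\to 1$ the parallelogram computation correctly gives $\|u_k\xi_m-u_j\xi_m\|\to 0$ for all $j,k$; but this is almost-invariance of $\xi_m$ under the \emph{differences} $u_j^{-1}u_k$, not under the $u_k$ themselves. There need not be any $j$ with $u_j=1$, and the proposed substitution $\xi_m\mapsto u_j^{-1}\xi_m$ gives $\|u_ku_j^{-1}\xi_m-u_j^{-1}\xi_m\|=\|u_ju_ku_j^{-1}\xi_m-\xi_m\|$, i.e.\ almost-invariance for a conjugate of $u_k$, which is not what is needed. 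Concretely: for $n=1$ and $u_1=-1$ on $\mathbb C$ the norm is $1$ with no almost-invariant vectors; more to the point, for $u_1=\lambda(a)$ and $u_2=\lambda(ab)$ on $\ell^2(F_2)$ one has $\tfrac12(u_1+u_2)=\lambda(a)\cdot\tfrac12(1+\lambda(b))$, which has norm $1$ because $\langle b\rangle\cong\mathbb Z$ is amenable, yet vectors almost invariant under both $u_1$ and $u_2$ would be almost invariant under $\lambda(a)$ and $\lambda(a)^{-1}\lambda(ab)=\lambda(b)$, hence under all of $F_2$, which is impossible.

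For the main equivalence the damage is repairable: in the direction ``non-amenable $\Rightarrow$ some average has norm $<1$'' take a finite set $F$ admitting no almost-invariant vectors and apply the (corrected) lemma to $F\cup\{e\}$; then one of the differences $g_j^{-1}g_k$ is $g_k$ itself and the argument closes. (The converse direction uses only the true, triangle-inequality half of the lemma and is fine, as is the passage between invariant means and almost-invariant $\ell^2$-vectors.) The final clause, however, cannot be rescued by your route, because for an arbitrary generating set it is actually false: $\{a,ab\}$ generates $F_2$, the left translation action of $F_2$ on itself is non-amenable, and yet $\bigl\|\tfrac12(\lambda(a)+\lambda(ab))\bigr\|=1$ by the computation above. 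That clause holds (and your word-length argument then goes through) under the usual additional hypothesis that the generating set is symmetric or contains the identity; this restriction is harmless for the paper, which only ever invokes the existence statement.
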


In the following we will make use of the \emph{Cuntz algebra $\mathcal O_2$}.
Recall that $\mathcal O_2$ is the universal $C^\ast$\mbox-algebra generated by
two isometries~$s_1$ and~$s_2$ with orthogonal range projections summing up to
the identity.  In other words, $\mathcal O_2$ is the universal
$C^\ast$\mbox-algebra generated by elements~$s_1$ and~$s_2$ satisfying the
relations $s_1^\ast s_1 = \I$, $s_2^\ast s_2 = \I$ and $s_1 s_1^\ast + s_2
s_2^\ast = \I$.  There is a canonical way to realize the Thompson groups as
subgroups of the unitary group of~$\mathcal O_2$.  This was discovered by
Nekrashevych in~\cite{Nek04}, as kindly pointed out to us by Wojciech
Szymanski.

Let us describe the concrete model we will use for the Cuntz algebra $\mathcal
O_2$.  We think of it as a specific set of bounded operators on
$\ell^2(\mathfrak X)$ where $\mathfrak X$~denotes the set $\mathbb Z[\tfrac
12] \cap \nobreak [0,1)$.  Define the operators~$s_1$ and~$s_2$
on~$\ell^2(\mathfrak X)$ by
\begin{align*}
    s_1 \delta_x = \delta_{x/2}
    \qquad \text{and} \qquad
    s_2 \delta_x = \delta_{(1 + x)/2},
    \end{align*}
for all $x \in \mathfrak X$.  It is straightforward to check that $s_1$
and~$s_2$ are isometries satisfying $s_1s_1^\ast + s_2 s_2^\ast = \I$, so they
generate a copy of the Cuntz algebra~$\mathcal O_2$.

The groups $F$,~$T$ and~$V$ act by definition on the set~$\mathfrak X$, and we
denote the induced representations on $\mathbb B(\ell^2(\mathfrak X))$
by~$\pi$.  That is, $\pi(g)\delta_x = \delta_{g(x)}$, for all $x \in \mathfrak
X$ and $g \in V$.  We use $\pi$ to denote this representation when restricted
to $F$ and~$T$, as well.  As it so happens, the image of~$\pi$ is contained
in~$\mathcal O_2$.  In fact, one can check the following explicit identities:
\begin{gather*}
    \pi(D)      =   s_2 s_2 s_1^\ast s_1^\ast
                +   s_1 s_1 s_2^\ast s_1^\ast
                +   s_1 s_2 s_1^\ast s_2^\ast
                +   s_2 s_1 s_2^\ast s_2^\ast, \\
    \pi(C)      =   s_2 s_2 s_1^\ast
                +   s_1 s_1^\ast s_2^\ast
                +   s_2 s_1 s_2^\ast s_2^\ast, \qquad
    \pi(D^2)    =   s_2 s_1^\ast
                    + s_1 s_2^\ast, \\
    \pi(\pi_0)  =   s_2 s_1 s_1^\ast
                + s_1 s_1^\ast s_2^\ast
                + s_2 s_2 s_2^\ast s_2^\ast.
    \end{gather*}

We denote the $C^\ast$\mbox-algebras generated by $\pi(F)$,~$\pi(T)$
and~$\pi(V)$ inside~$\mathcal O_2$ by $C^\ast_\pi(F)$,~$C^\ast_\pi(T)$
and~$C^\ast_\pi(V)$, respectively.

\begin{proposition}
    With the notation above, we have
    \begin{equation*}
        C^\ast_\pi(F)
            \subsetneq C^\ast_\pi(T)
            \subsetneq C^\ast_\pi(V)
            = \mathcal O_2.
        \end{equation*}
    \end{proposition}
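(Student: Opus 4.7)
The inclusion chain $C^\ast_\pi(F) \subseteq C^\ast_\pi(T) \subseteq C^\ast_\pi(V) \subseteq \mathcal O_2$ is immediate from $F \subseteq T \subseteq V$ together with the explicit formulas given for the images of the generators of $V$. The plan is to establish the two strict inclusions and the equality $C^\ast_\pi(V) = \mathcal O_2$ in that order.

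For the strict inclusion $C^\ast_\pi(F) \subsetneq C^\ast_\pi(T)$ I would use the vector $\delta_0$. Every $g \in F$ fixes $0 \in \mathfrak X$, so $\pi(g)\delta_0 = \delta_0$, and consequently $\mathbb C \delta_0$ is a common invariant subspace for $\pi(F)$, and hence for all of $C^\ast_\pi(F)$. On the other hand, $T$ contains elements moving $0$---for example $D$, since $D(0) = 3/4$ by its explicit definition---so $\pi(D)\delta_0 = \delta_{3/4} \notin \mathbb C\delta_0$, giving $\pi(D) \in C^\ast_\pi(T) \setminus C^\ast_\pi(F)$.

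To prove $C^\ast_\pi(V) = \mathcal O_2$ I would show that $s_1$ and $s_2$ lie in $C^\ast_\pi(V)$. My plan has three steps. First, extract each dyadic projection $s_w s_w^*$ from $C^\ast_\pi(V)$ by applying spectral or averaging arguments to finite-order elements of $V$ supported on the corresponding dyadic interval. Second, combine these projections with $\pi(D^2) = s_1 s_2^* + s_2 s_1^*$ and its analogues at deeper levels (obtained from $V$-elements that swap equal-length dyadic intervals) to recover all equal-length matrix units $s_w s_v^*$. Third, use the level-mixing terms appearing in $\pi(\pi_0)$, such as $s_1 s_1^* s_2^* = s_1 (s_2 s_1)^*$, which bridge words of different length, to finally reconstruct the Cuntz isometries $s_1, s_2$ themselves. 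The first step is the most delicate, since $\pi(V)$ consists only of unitaries while the projections $s_w s_w^*$ are not.

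The strict inclusion $C^\ast_\pi(T) \subsetneq C^\ast_\pi(V)$ is where I expect the main obstacle. Since $\mathcal O_2$ is simple, purely infinite, and admits no tracial state, it would suffice to find on $C^\ast_\pi(T)$ any of: a non-zero tracial state, a non-zero character, or a non-trivial two-sided ideal. My first approach would be to exploit the fact that $T$ preserves the cyclic order on $S^1 = \mathbb R / \mathbb Z$, while $V$ does not: for instance, I would attempt to build a tracial functional on $C^\ast_\pi(T)$ from the rotation-number quasi-morphism on $T$, or a proper ideal arising from commutator relations that detect orientation. Since both $T$ and $V$ act on the same set $\mathfrak X$ and both $C^*$-algebras sit inside the same simple $\mathcal O_2$, the distinguishing invariant must come from a subtler feature than their underlying dynamics on $\mathfrak X$.
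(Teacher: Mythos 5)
Your first strict inclusion is correct and is exactly the paper's argument: $\mathbb C\delta_0$ is a $C^\ast_\pi(F)$-invariant subspace that $\pi(D)$ fails to preserve. The other two parts contain genuine gaps. For $C^\ast_\pi(V)=\mathcal O_2$, your step one cannot be carried out with finite-order elements: if $G$ is a \emph{finite} subgroup of $V$ supported on a dyadic interval $I_w$, then $\frac{1}{|G|}\sum_{g\in G}\pi(g)$, like any spectral projection of a single finite-order $\pi(g)$, is the projection onto the $G$-fixed vectors, and on $\ell^2(I_w\cap\mathfrak X)$ this is never zero, since every (necessarily finite) orbit contributes its orbit-sum as a fixed vector; so you never isolate $s_ws_w^\ast$ or $\I-s_ws_w^\ast$ this way. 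The delicacy you flag is resolved in the paper by using an \emph{infinite, non-amenably acting} subgroup instead: the group $V_0$ of elements fixing $[0,\tfrac12)$ pointwise acts non-amenably on $[\tfrac12,1)$ by Proposition~\ref{thm:rosenblatt}, so by Proposition~\ref{ac,am,gen} some average $\frac1n\sum_{k}\pi(g_k)$ with $g_k\in V_0$ has norm strictly less than $1$ on $\mathcal H_2=\ell^2(\mathbb Z[\tfrac12]\cap[\tfrac12,1))$ while acting as the identity on $\mathcal H_1$; its powers then converge in norm to $s_1s_1^\ast$, and one finishes in one stroke via $s_1=\pi(A)s_1s_1^\ast+\pi(D^2A^{-1})s_2s_2^\ast$ and $s_2=\pi(D^2)s_1$, with no need to reconstruct all matrix units.

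For $C^\ast_\pi(T)\neq\mathcal O_2$ you only list candidate invariants (a trace built from the rotation-number quasi-morphism, an ideal detecting orientation) without constructing any of them, and it is far from clear that such objects exist on $C^\ast_\pi(T)$. The paper's argument is much softer and avoids traces, characters and ideals altogether: let $\phi_0$ be the vector state at $\delta_0$ and $\phi_1=\phi_0\circ\theta$, where $\theta$ is the automorphism of $\mathcal O_2$ interchanging $s_1$ and $s_2$. Both states are $\{0,1\}$-valued on $\pi(V)$, with $\phi_0(\pi(g))=1$ iff $\lim_{x\to0}g(x)=0$ and $\phi_1(\pi(g))=1$ iff $\lim_{x\to1}g(x)=1$. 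For $g\in T$, which acts as a circle homeomorphism, these two conditions are each equivalent to $g\in F$, so $\phi_0$ and $\phi_1$ agree on $\pi(T)$ and hence on $C^\ast_\pi(T)$, whereas $\phi_0(s_1s_1^\ast)=1\neq0=\phi_1(s_1s_1^\ast)$. A subalgebra on which two distinct states of $\mathcal O_2$ coincide is proper. You should redirect both of these parts along these lines; as written, neither reaches a proof.
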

\begin{proof}
    By construction, $C^\ast_\pi(F) \subseteq C^\ast_\pi(T) \subseteq
    C^\ast_\pi(V) \subseteq \mathcal O_2$.  We will prove that the first two
    inclusions are proper and that the last one is an equality.

    It is easy to see that $C^\ast_\pi(F) \neq C^\ast_\pi(T)$, since $\mathbb
    C\delta_0$ is a $C^\ast_\pi(F)$-invariant subspace which is not
    $C^\ast_\pi(T)$-invariant.  Hence $C^\ast_\pi(F) \neq C^\ast_\pi(T)$.

    For the rest, let us first prove that $C^\ast_\pi(V) = \mathcal O_2$, and
    afterwards that $C^\ast_\pi(T) \neq \mathcal O_2$.  Our strategy for
    showing that $C^\ast_\pi(V) = \mathcal O_2$ is to prove that
    $C^\ast_\pi(V)$ contains~$s_1$ and~$s_2$.  In fact, since $s_2 = \pi(D^2)
    s_1$, it suffices to show that it contains $s_1$.  To ease notation, let
    us denote $\ell^2(\mathbb Z[\frac 12] \cap \nobreak [0,\frac 12))$ and
    $\ell^2(\mathbb Z[\frac 12]\cap \nobreak [\frac 12, 1))$ by $\mathcal H_1$
    and~$\mathcal H_2$, respectively.  Consider the subgroup~$V_0$ of~$V$
    consisting of elements~$g \in V$ satisfying $g(x) = x$, for all $x \in
    [0,\frac 12)$.  Note that, for $g\in V_0$, both $\mathcal H_1$
    and~$\mathcal H_2$ are invariant subspaces for $\pi(g)$, so that we can
    write $\pi(g) = \pi(g)|_{\mathcal H_1} \oplus \pi(g)|_{\mathcal H_2}$.  It
    is easy to see using Theorem~\ref{thm:rosenblatt} that the action of~$V_0$
    on~$[\frac 12, 1)$ is non-amenable.  Thus, by Proposition~\ref{ac,am,gen},
    there exist elements $g_1, \ldots, g_n$ in~$V_0$, such that $\| \frac 1n
    \sum_{k=1}^n \pi(g_k)|_{\mathcal H_2} \| < 1$.  Since $\frac 1n
    \sum_{k=1}^n \pi(g_k)|_{\mathcal H_1} = \I_{\mathcal H_1}$, we deduce that
    $(\frac 1n \sum_{k=1}^n \pi(g_k))^m$ converges in norm, as~$m \to \infty$,
    to the projection of~$\ell^2(\mathfrak X)$ onto~$\mathcal H_1$, that is,
    to $s_1 s_1^\ast$.  Hence $s_1 s_1^\ast$, and therefore also $s_2 s_2^\ast
    = \I - s_1 s_1^\ast$, belong to $C_\pi^\ast(V)$.  It is straightforward to
    check that
    \begin{equation*}
        s_1 = \pi(A) s_1 s_1^\ast + \pi(D^2A^{-1}) s_2 s_2^\ast.
        \end{equation*}
    This implies that $s_1 \in C^\ast_\pi(V)$, and we conclude that
    $C^\ast_\pi(V) = \mathcal O_2$.

    Last, let us prove that $C^\ast_\pi(T) \neq \mathcal O_2$.  We do this by
    exhibiting two different states~$\phi_0$ and~$\phi_1$ on~$\mathcal O_2$
    which agree on $C^\ast_\pi(T)$.  Let $\phi_0$ denote the vector state
    given by $\delta_0$.  It is well-known that elements of the form $s_{i_1}
    s_{i_2} \cdots s_{i_n} s_{i_1}^\ast s_{i_2}^\ast \cdots s_{i_m}^\ast$ span
    a dense subspace of $\mathcal O_2$.  Moreover, it is easy to check that
    $\phi_0(s_1^n(s_1^\ast)^m) = 1$, for all $n,m \geq 0$, while $\phi_0$ is
    zero on the rest of these elements.  Let $\phi_1$ denote the composition
    of~$\phi_0$ with the automorphism of~$\mathcal O_2$ that interchanges
    $s_1$ and~$s_2$.  Then, for all $n,m \geq 0$, $\phi_1$ satisfies
    $\phi_1(s_{i_1} \cdots s_{i_n} s_{i_1}^\ast \cdots s_{i_m}^\ast) = 0$,
    unless $i_1 = \ldots = i_n = 2$ and $j_1 = \ldots = j_m = 2$, in which
    case $\phi_1(s_2^n(s_2^\ast)^m) = 1$.  It follows easily from
    Nekrashevych's description of the representation~$\pi$
    (see~\cite[Section~9]{Nek04}) that for all~$g \in V$, either
    $\phi_k(\pi(g)) = 0$ or $\phi_k(\pi(g)) = 1$, where $k = 0,1$.  Similarly,
    it is easily seen that, for~$g \in V$, $\phi_0(\pi(g)) = 1$ if and only if
    $\lim_{x \to 0} g(x) = 0$, and that $\phi_1(\pi(g)) = 1$ if and only if
    $\lim_{x \to 1} g(x) = 1$.  In particular, we see that $\phi_k(\pi(g)) =
    \I_{F}(g)$, for all $g \in T$ and $k = 0,1$.  Thus $\phi_0$ and~$\phi_1$
    agree on~$\pi(T)$, so clearly they also agree on~$C^\ast_\pi(T)$.  It
    follows that $C^\ast_\pi(T) \neq \mathcal O_2$, as the two states clearly
    do not agree on~$\mathcal O_2$.
    \end{proof}

It turns out that the representation~$\pi$ is closely connected to amenability
of the Thompson group~$F$, as we show in the following.

\begin{proposition}
                                                            \label{prop:sigma}
    The Thompson group~$F$ is amenable if and only if $\pi$ is contained in
    the left regular representation of~$T$.  This is further equivalent to the
    fact that $\pi$ is contained in the left regular representation of~$F$.
    \end{proposition}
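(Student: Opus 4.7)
The plan is to prove the stated equivalences cyclically through the implications $F$ amenable $\Rightarrow$ $\pi$ contained in $\lambda_T$ $\Rightarrow$ $\pi$ contained in $\lambda_F$ $\Rightarrow$ $F$ amenable. I read the word ``contained'' here in the sense of the weak containment defined earlier in the section, since the quasi-regular representation $\pi$ cannot embed as a Hilbert-space subrepresentation of $\lambda_T$ unless $F$ is finite; this is the natural notion in the $C^\ast$-algebraic setting of the section.

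For the first implication, I would observe that the $T$-action on $\mathfrak X = \mathbb Z[\tfrac 12] \cap \nobreak [0,1)$ is transitive and that the stabilizer of $0 \in \mathfrak X$ in $T$ is exactly $F$, by the very definition of $F$ as $\{g \in T : g(0) = 0\}$. Consequently every point stabilizer is a $T$-conjugate of $F$; if $F$ is amenable, then so are all the point stabilizers, and Proposition~\ref{pro:awc} immediately yields that $\pi$ is weakly contained in $\lambda_T$.

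For the second implication, I would use the standard decomposition of $\lambda_T$ as an $F$-representation, namely $\lambda_T|_F \cong \bigoplus_{Fs \in F \backslash T} \lambda_F$, which shows that $\lambda_T|_F$ is weakly equivalent to $\lambda_F$. Since weak containment passes to restrictions of representations to subgroups, from weak containment of $\pi$ in $\lambda_T$ one deduces weak containment of $\pi|_F$ in $\lambda_T|_F$, and hence in $\lambda_F$.

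The third implication is the crux. The key observation is that every $g \in F$ fixes the point $0 \in \mathfrak X$, so the unit vector $\delta_0 \in \ell^2(\mathfrak X)$ is fixed by $\pi(F)$; in particular the trivial $F$-representation $\I_F$ is a genuine subrepresentation of $\pi|_F$ and hence is weakly contained in $\pi|_F$. Transitivity of weak containment then gives that $\I_F$ is weakly contained in $\lambda_F$, which by the classical Hulanicki--Reiter theorem is equivalent to amenability of $F$. The main conceptual obstacle is simply fixing the right interpretation of ``contained''; once this is done, each step invokes a standard fact about weak containment, either the proposition recalled earlier or the $\ast$-homomorphism description of weak containment.
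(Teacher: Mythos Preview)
Your proof is correct and follows essentially the same route as the paper: the same three implications, the same use of transitivity of the $T$-action with stabilizer~$F$ and Proposition~\ref{pro:awc}, and the same observation that $\delta_0$ is $\pi(F)$-fixed so the trivial representation sits inside $\pi|_F$, whence Hulanicki--Reiter applies. The only difference is that where the paper says ``clearly'' for the passage from weak containment in $\lambda_T$ to weak containment in $\lambda_F$, you spell out the coset decomposition $\lambda_T|_F \cong \bigoplus \lambda_F$; this is a reasonable elaboration rather than a different argument.
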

\begin{proof}
    Clearly, $\pi|_F$ is weakly contained in the left regular representation
    of~$F$ if $\pi$ is weakly contained in the left regular representation
    of~$T$.  Suppose now that $F$ is amenable.  Since $T$ acts transitively
    on~$\mathfrak X$, all the stabilizers of the action are isomorphic.  It
    follows from Proposition~\ref{pro:awc} that $\pi$ is weakly contained in
    the left regular representation of~$T$, as $F$ is the stabilizer of~$0$.

    Suppose that $\pi$ is weakly contained in the left regular representation
    of~$F$.  Let $p$ denote the projection onto $\mathbb C \delta_0$, which is
    a $\pi(F)$-invariant subspace of~$\ell^2(\mathfrak X)$.  Then $p\pi p$ is
    the trivial representation of~$F$.  This is contained in~$\pi$, so by
    transitivity of weak containment, we deduce that the trivial
    representation of~$F$ is weakly contained in the left regular
    representation of~$F$.  By~\cite[Theorem~G.3.2]{BHV}, this is equivalent
    to amenability of $F$.
    \end{proof}

\begin{theorem}
                                                        \label{thm:simplicity}
    If $C^\ast_r(T)$ is simple, then $F$ is non-amenable.
    \end{theorem}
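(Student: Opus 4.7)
The plan is to prove the contrapositive: assuming that $F$ is amenable, we will exhibit a non-trivial two-sided ideal in $C^\ast_r(T)$, contradicting simplicity. By Proposition~\ref{prop:sigma}, amenability of $F$ provides a surjective $\ast$-homomorphism $\sigma\colon C^\ast_r(T) \to C^\ast_\pi(T)$ determined by $\sigma(\lambda(g)) = \pi(g)$ for all $g \in T$; hence it suffices to show that $\ker\sigma \neq 0$.

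A first guiding observation is a qualitative asymmetry between the representations $\lambda_T$ and $\pi$. The vector $\delta_0 \in \ell^2(\mathfrak X)$ is fixed by $\pi(F)$, because $F$ is the stabiliser of $0$. In contrast, $\lambda_T$ admits no non-zero $\lambda(F)$-fixed vector, since any such $\xi \in \ell^2(T)$ would have to be constant on the right cosets of $F$ in $T$, and these are both infinite in number and each infinite. Thus, if $\sigma$ were injective, it would transport the canonical trace $\tau$ to a faithful tracial state on $C^\ast_\pi(T) \subsetneq \mathcal O_2$ while simultaneously the pull-back $\phi_0 \circ \sigma$ of the vector state at $\delta_0$ gives the highly non-tracial state $\psi$ satisfying $\psi(\lambda(g)) = \mathbf 1_F(g)$; the goal is to leverage this tension.

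To turn the asymmetry into an honest element of $\ker\sigma$, I would combine two ingredients in the spirit of Proposition~\ref{ac,am,gen}. Amenability of $F$ implies that for any F\o{}lner sequence $(F_m)$ in $F$ the averages $a_m = \frac{1}{|F_m|}\sum_{f \in F_m}\lambda(f) \in C^\ast_r(T)$ have norm exactly~$1$, as one sees by testing against almost $F$-invariant vectors in $\ell^2(F) \subseteq \ell^2(T)$. Simultaneously, by Proposition~\ref{thm:rosenblatt} the action of $T$ on $\mathfrak X$ is non-amenable (since $T$ is non-amenable with amenable stabiliser $F$), so Proposition~\ref{ac,am,gen} provides elements $t_1, \ldots, t_n \in T$ with $\bigl\|\tfrac{1}{n}\sum_{k=1}^n \pi(t_k)\bigr\| < 1$. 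My plan is to build a positive element $b \in C^\ast_r(T)$ as a product or convex combination of the F\o{}lner averages $a_m$ with these Rosenblatt averages, engineered so that $\|b\|_{C^\ast_r(T)} = 1$ (forced by F\o{}lner testing) while $\|\sigma(b)\| < 1$ (forced by the Rosenblatt strict contraction), producing after a passage to a norm-limit a non-zero element of $\ker\sigma$.

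The main obstacle is engineering $b$: the Rosenblatt contraction takes effect only after applying $\sigma$, so $b$ must be arranged so that its $\lambda_T$-norm cannot detect the contraction, while its image under $\sigma$ does. Concretely, this amounts to exploiting the asymmetry between $\ell^2(F)$ and $\ell^2(\mathfrak X \setminus \{0\})$ inside the relevant representations---the former carries almost-invariant vectors for the restriction of $\pi$ to $F$, while $\ell^2(T)$ carries none for $\lambda_T|_F$---and I expect the final step to be closely related to the ``convex hulls containing zero'' characterisations of amenability of $F$ alluded to in the abstract.
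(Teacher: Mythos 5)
Your setup coincides with the paper's: assume $F$ amenable, invoke Proposition~\ref{prop:sigma} to obtain $\sigma\colon C^\ast_r(T)\to C^\ast_\pi(T)$ with $\pi=\sigma\circ\lambda$, and show $\ker\sigma\neq 0$. But the entire content of the theorem lies in that last step, and your proposal does not supply it. The paper produces a concrete non-zero element of $\ker\sigma$: it exhibits two commuting elements $a=CDC$ and $b=D^2CDCD^2$ of $T$ such that for every $x\in[0,1)$ the pairs $(a(x),b(x))$ and $(ab(x),x)$ agree up to order; this forces $\pi(a)+\pi(b)=\pi(ab)+\I$, so $x_0=e+ab-a-b\in\mathbb C T$ satisfies $\pi(x_0)=0$ while $\lambda(x_0)\neq 0$ (as $\lambda$ is injective on $\mathbb C T$). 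Nothing in your proposal replaces this combinatorial construction; you explicitly defer the ``engineering'' of the element $b$, which is precisely the missing idea.

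Moreover, the two heuristics you offer in its place are unlikely to close the gap. First, the asymmetry ``$\pi|_F$ has a fixed vector $\delta_0$ but $\lambda_T|_F$ has none'' is invisible to $C^\ast$-norms: $\pi$ is exactly the quasi-regular representation $\ell^2(T/F)$, and when $F$ is amenable the trivial representation of $F$ \emph{is} weakly contained in $\lambda_T|_F$ (this is the content of Proposition~\ref{prop:sigma}), so no state-theoretic or trace-theoretic contradiction arises from that asymmetry alone. Second, the averaging scheme is self-defeating: it is true that a non-isometric $\ast$-homomorphism has non-trivial kernel, so an element with $\|b\|_{C^\ast_r(T)}=1>\|\sigma(b)\|$ would suffice; but the Rosenblatt elements $t_1,\ldots,t_n$ from Proposition~\ref{ac,am,gen} necessarily generate a non-amenable subgroup of $T$, hence $\|\tfrac1n\sum_k\lambda(t_k)\|<1$ already in $C^\ast_r(T)$, and multiplying or averaging with F\o{}lner averages over $F$ (which have norm $1$ in \emph{both} representations, since $\delta_0$ is $\pi(F)$-fixed) gives no mechanism for keeping the $\lambda$-norm at $1$ while the $\pi$-norm drops. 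You would need a genuinely new input --- in the paper, that input is the pointwise identity satisfied by $a$ and $b$ --- and without it the argument does not go through.
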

\begin{proof}
    Suppose that $F$ is amenable, and let us then prove that $C^\ast_r(T)$ is
    not simple.  By Proposition~\ref{prop:sigma}, $\pi$ is weakly contained in
    the left regular representation of~$T$, so there exists a
    $\ast$\mbox-homomorphism~$\sigma$ from $C^\ast_r(T)$ to $C^\ast_\pi(T)$
    such that $\pi = \sigma \circ \lambda$.  Our goal is to show that the
    kernel of $\sigma$ is a non-trivial ideal in $C^\ast_r(T)$.  Since
    $\sigma$ is obviously not the zero map, we only need to show that its
    kernel contains a non-trivial element.  The $\ast$-homomorphism $\lambda$
    is injective on the complex group algebra $\mathbb CT$, so it suffices to
    find an element $x\neq 0$ in $\mathbb CT$ such that $\pi(x) = 0$.

    Consider the elements $a$ and $b$ of $T$ (and their product) defined by
    \begin{center}
    \hfill
    \begin{tikzpicture}[scale=.4]
        
        \draw [thick] (0,0) rectangle (8,8);
        \node at (4,-1) {$a = CDC$};
        
        \draw [help lines] (1,0) -- (1,8);  \draw [help lines] (0,1) -- (8,1);
        \draw [help lines] (2,0) -- (2,8);  \draw [help lines] (0,2) -- (8,2);
        \draw [help lines] (3,0) -- (3,8);  \draw [help lines] (0,3) -- (8,3);
        \draw [help lines] (4,0) -- (4,8);  \draw [help lines] (0,4) -- (8,4);
        \draw [help lines] (5,0) -- (5,8);  \draw [help lines] (0,5) -- (8,5);
        \draw [help lines] (6,0) -- (6,8);  \draw [help lines] (0,6) -- (8,6);
        \draw [help lines] (7,0) -- (7,8);  \draw [help lines] (0,7) -- (8,7);

        % CDC  =  a
        \draw [very thick] (0,0) -- (4,4) -- (5,6) -- (6,7) -- (8,8);

        \end{tikzpicture}
    \hfill
    \begin{tikzpicture}[scale=.4]
        
        \draw [thick] (0,0) rectangle (8,8);
        \node at (4,-1) {$b = D^2CDCD^2$};
        
        \draw [help lines] (1,0) -- (1,8);  \draw [help lines] (0,1) -- (8,1);
        \draw [help lines] (2,0) -- (2,8);  \draw [help lines] (0,2) -- (8,2);
        \draw [help lines] (3,0) -- (3,8);  \draw [help lines] (0,3) -- (8,3);
        \draw [help lines] (4,0) -- (4,8);  \draw [help lines] (0,4) -- (8,4);
        \draw [help lines] (5,0) -- (5,8);  \draw [help lines] (0,5) -- (8,5);
        \draw [help lines] (6,0) -- (6,8);  \draw [help lines] (0,6) -- (8,6);
        \draw [help lines] (7,0) -- (7,8);  \draw [help lines] (0,7) -- (8,7);

        % DDCDCDD  =  b
        \draw [very thick] (0,0) -- (1,2) -- (2,3) -- (4,4) -- (8,8);

        \end{tikzpicture}
    \hfill
    \begin{tikzpicture}[scale=.4]
        
        \draw [thick] (0,0) rectangle (8,8);
        \node at (4,-1) {$ab$ $(= ba)$};
        
        \draw [help lines] (1,0) -- (1,8);  \draw [help lines] (0,1) -- (8,1);
        \draw [help lines] (2,0) -- (2,8);  \draw [help lines] (0,2) -- (8,2);
        \draw [help lines] (3,0) -- (3,8);  \draw [help lines] (0,3) -- (8,3);
        \draw [help lines] (4,0) -- (4,8);  \draw [help lines] (0,4) -- (8,4);
        \draw [help lines] (5,0) -- (5,8);  \draw [help lines] (0,5) -- (8,5);
        \draw [help lines] (6,0) -- (6,8);  \draw [help lines] (0,6) -- (8,6);
        \draw [help lines] (7,0) -- (7,8);  \draw [help lines] (0,7) -- (8,7);

        % CDCDDCDCDD  =  ab  = ba
        \draw [very thick] (0,0)--(1,2)--(2,3)--(4,4)--(5,6)--(6,7)--(8,8);

        \end{tikzpicture}
    \hfill \mbox{}
    \end{center}
    Obviously, $a$ and $b$ commute, and it is easy to see that the values
    $(a(x), b(x))$ and $(ab(x), x)$ agree, up to a permutation, for all $x \in
    [0,1)$.  From this we deduce that $\pi(a + b)\delta_x = \pi(ab +
    e)\delta_x$, for all $x \in \mathfrak X$.   In particular, $\pi(a+b-ab-e)
    = 0$, so that the kernel of $\sigma$ is non-trivial.  Thus $C^\ast_r(T)$
    is not simple.
    \end{proof}

As mentioned in the introduction, Le~Boudec and Matte Bon~\cite{LBMB16}
recently proved the converse implication to the theorem above.  They also
proved therein that $C^\ast_r(V)$ is simple, and that $C^\ast_r(F)$ is simple
if $C^\ast_r(T)$ is simple.  The converse of this latter implication was
already proved by Breuillard, Kalantar, Kennedy and Ozawa~\cite{BKKO14}.

It is still unknown whether the reduced group $C^\ast$-algebras of~$T$ is
simple, but it is, however, known that it has a unique tracial state.  This
was proved by Dudko and Medynets~\cite{DuMe12}.

\begin{remark}
                                                               \label{trace;F}
    We believe that is is a well-known fact that $F$ is non-amenable if and
    only if its reduced group $C^\ast$-algebra has a unique tracial state,
    although we have not been able to find an explicit reference for this.  It
    follows, for example, from the fact that $C^\ast_r(F)$ has at most two
    extremal tracial states, as showed by Dudko and Medynets~\cite{DuMe12}.  A
    different proof of this is based on the new characterization of groups
    whose reduced group $C^\ast$-algebra has a unique tracial state given by
    Breuillard, Kalantar, Kennedy and Ozawa.  In the following we briefly
    indicate the argument.

    It was shown in the aforementioned paper that $C^\ast_r(F)$ has a unique
    tracial state if and only if the amenable radical of~$F$ is trivial.
    However, it is a well-known fact (see~\cite[Theorem~4.3]{CFP96}) that
    every non-trivial normal subgroup of $F$ contains a copy of~$F$.  Hence
    the amenable radical is trivial if and only if $F$ is non-amenable.
    \end{remark}

We end this section by giving a few new equivalent characterizations of
amenability of~$F$.  We will make use of the following well-known result, for
which a proof can be found in~\cite[Lemma~4.1]{Haa15} (in the case where the
action is left translation of the group on itself).

\begin{lemma}
                                                         \label{positive;norm}
    Let $G$ be a discrete group acting on a set~$\mathfrak X$, and
    let~$\sigma$ be the corresponding representation on~$\ell^2(\mathfrak X)$.
    Then $\| x + y \| \geq \| x \|$, for $x,y \in \mathbb R_+ G$.
    \end{lemma}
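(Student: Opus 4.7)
The plan is to reduce the inequality to a supremum over pairs of nonnegative vectors, where the claim becomes transparent because the operators $\sigma(z)$, for $z \in \mathbb R_+ G$, have nonnegative matrix coefficients in the standard basis $\{\delta_x : x \in \mathfrak X\}$. Indeed, each $\sigma(g)$ is a permutation of this basis, and $z$ is a nonnegative combination of such.

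First I would establish that, for any $z = \sum_g c_g g \in \mathbb R_+ G$,
\begin{equation*}
    \|\sigma(z)\|
        = \sup \bigl\{ \langle \sigma(z) \xi, \eta \rangle :
            \xi, \eta \in \ell^2(\mathfrak X)_+,\ \|\xi\| = \|\eta\| = 1 \bigr\},
    \end{equation*}
where $\ell^2(\mathfrak X)_+$ denotes the cone of nonnegative real-valued functions. The key pointwise estimate is $|\langle \sigma(g) \xi, \eta \rangle| \leq \langle \sigma(g) |\xi|, |\eta| \rangle$, which holds because $\sigma(g)$ merely permutes the basis $\{\delta_x\}$; summing over $g$ with nonnegative weights $c_g$ gives $|\langle \sigma(z) \xi, \eta \rangle| \leq \langle \sigma(z) |\xi|, |\eta| \rangle$, and since $\||\xi|\|_2 = \|\xi\|_2$, the supremum defining the operator norm is indeed attained over nonnegative vectors.

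With this in hand, fix $\xi, \eta \in \ell^2(\mathfrak X)_+$ of unit norm. Since $y \in \mathbb R_+ G$, the quantity $\langle \sigma(y) \xi, \eta \rangle$ is a nonnegative real number, so
\begin{equation*}
    \langle \sigma(x) \xi, \eta \rangle
        \leq \langle \sigma(x) \xi, \eta \rangle + \langle \sigma(y) \xi, \eta \rangle
        = \langle \sigma(x+y) \xi, \eta \rangle.
    \end{equation*}
Taking the supremum over all such $\xi, \eta$ on the left gives $\|\sigma(x)\|$ by the first step, and bounding the right-hand side by $\|\sigma(x+y)\|$ yields $\|x\| \leq \|x+y\|$.

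There is no genuine obstacle in this argument; the only point requiring care is the very first observation that the operator norm of a nonnegatively weighted sum of permutation operators is attained on nonnegative vectors. Once that is isolated, both inequalities are immediate from the fact that $\sigma(y)$ preserves the cone $\ell^2(\mathfrak X)_+$ and produces nonnegative pairings with vectors in that cone.
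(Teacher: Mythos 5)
Your proof is correct. Note, though, that the paper does not prove this lemma itself; it cites \cite[Lemma~4.1]{Haa15}, where the statement is proved for the left regular representation, and the argument there runs through the canonical trace: for $h \in \mathbb R_+ G$ one writes $\|\lambda(h)\| = \lim_n \tau\bigl((h^\ast h)^n\bigr)^{1/2n}$ and observes that each moment $\tau\bigl((h^\ast h)^n\bigr)$ is a sum of products of the nonnegative coefficients of $h$, hence monotone under enlarging those coefficients. Your route is genuinely different: you exploit that each $\sigma(g)$ is a permutation of the basis $\{\delta_x\}$, so $\sigma(z)$ has nonnegative matrix entries for $z \in \mathbb R_+ G$, and you show via $|\langle \sigma(z)\xi, \eta\rangle| \leq \langle \sigma(z)|\xi|, |\eta|\rangle$ that the operator norm is attained on the positive cone, after which monotonicity is immediate. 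What your approach buys is that it applies verbatim to an arbitrary action of $G$ on a set $\mathfrak X$ --- which is exactly the generality the lemma is stated in, and which the paper's citation only covers implicitly, since a general $\ell^2(\mathfrak X)$ carries no faithful trace to run the moment argument on. The trace argument, in exchange, is the one that generalizes to situations where one wants quantitative information about $\tau$ rather than just the norm inequality. Both proofs are complete; yours is the more elementary and the better fit for the statement as written.
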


By scrutinizing the proof of Theorem~\ref{thm:simplicity} one sees that, with
the notation therein, the Thompson group~$F$ is non-amenable if the closed
two-sided ideal generated by the element $\I + \lambda(ab) - \lambda(a) -
\lambda(b)$ inside~$C^\ast_r(T)$ is the whole of~$C^\ast_r(T)$.  This element
is not unique with this property.  Indeed, the same holds true for
$\lambda(x)$, where $x$ is an element of the complex group algebra $\mathbb C
T$ such that $\pi(x) = 0$.  Bleak and Juschenko~\cite{BJ14} established a
partial converse to Theorem~\ref{thm:simplicity} by proving that such~$x$
exists if the Thompson group~$F$ is non-amenable.  The equivalence of the
first two statements in the following proposition is a strengthening of their
result, in the sense that we exhibit a concrete such element~$x$.

\begin{proposition}
                                                                 \label{ideal}
    With $a$ and~$b$ as above, the following are equivalent:
    \begin{enumerate}[%
        label=\textup{(\arabic{enumi})},
        ref=(\arabic{enumi}),
        itemsep=2pt plus 1pt minus 1pt]

        \item
                                                            \label{ideal:amen}
        The Thompson group~$F$ is non-amenable.

        \item
                                                         \label{ideal:whole;T}
        The closed two-sided ideal generated by $\I + \lambda(ab) - \lambda(a)
        - \lambda(b)$ in $C^\ast_r(T)$ is all of $C^\ast_r(T)$.

        \item
                                                          \label{ideal:zero;T}
        The closed convex hull of $\{ \lambda(hah^{-1}) + \lambda(hbh^{-1}) :
        h \in T \}$ contains~$0$.

        \item
                                                         \label{ideal:whole;F}
        The closed two-sided ideal generated by $\I + \lambda(ab) - \lambda(a)
        - \lambda(b)$ in $C^\ast_r(F)$ is all of $C^\ast_r(F)$.

        \item
                                                          \label{ideal:zero;F}
        The closed convex hull of $\{ \lambda(hah^{-1}) + \lambda(hbh^{-1}) :
        h \in F \}$ contains~$0$.

        \end{enumerate}
    \end{proposition}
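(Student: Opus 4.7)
Write $y = \lambda(a) + \lambda(b)$ and $\xi = \I + \lambda(ab) - y$, and let $\tau$ denote the canonical trace on the reduced group $C^\ast$-algebra, so that $\tau(y) = 0$ and $\tau(\xi) = 1$. Recall from the proof of Theorem~\ref{thm:simplicity} that $\pi(a) + \pi(b) = \I + \pi(ab)$, i.e.\ $\pi(\xi) = 0$. The plan is to establish the cycle (1) $\Rightarrow$ (5) $\Rightarrow$ (3) $\Rightarrow$ (1) together with (1) $\Rightarrow$ (4) $\Rightarrow$ (2) $\Rightarrow$ (1); the implications (5) $\Rightarrow$ (3) and (4) $\Rightarrow$ (2) are immediate from the canonical isometric inclusion $C^\ast_r(F) \hookrightarrow C^\ast_r(T)$, which sends the ideal generated by $\xi$ in $C^\ast_r(F)$ into the ideal generated by $\xi$ in $C^\ast_r(T)$, and similarly for the convex hulls.

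For (2) $\Rightarrow$ (1) and (3) $\Rightarrow$ (1), suppose $F$ is amenable. By Proposition~\ref{prop:sigma} there is a $\ast$-homomorphism $\sigma \colon C^\ast_r(T) \to C^\ast_\pi(T)$ with $\sigma \circ \lambda = \pi$, whence $\sigma(\xi) = 0$. Thus $\sigma$ vanishes on the closed two-sided ideal $I$ generated by $\xi$, contradicting $\I \in I$; this proves (2) $\Rightarrow$ (1). For (3) $\Rightarrow$ (1), applying $\sigma$ to any convex combination $\sum t_n \lambda(h_n) y \lambda(h_n)^{-1}$ and using $\pi(y) = \I + \pi(ab)$ produces the element $\I + \sum t_n \pi(h_n ab h_n^{-1})$; by Lemma~\ref{positive;norm} applied to the permutation representation on $\ell^2(\mathfrak X)$, this element has norm at least $\|\I\| = 1$, so such convex combinations cannot tend to $0$, contradicting (3).

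For (1) $\Rightarrow$ (4) and (1) $\Rightarrow$ (5), I would invoke the results of Le Boudec and Matte~Bon~\cite{LBMB16}, by which non-amenability of $F$ implies $C^\ast$-simplicity of $C^\ast_r(F)$; together with the Kalantar--Kennedy characterization~\cite{KK14} and Haagerup's equivalence between $C^\ast$-simplicity and Powers' averaging, this gives that for every $z \in C^\ast_r(F)$ and every $\varepsilon > 0$ there exist $h_1, \ldots, h_n \in F$ and weights $t_i \geq 0$ with $\sum t_i = 1$ such that
\[
    \Bigl\| \sum_{i=1}^n t_i \lambda(h_i) z \lambda(h_i)^{-1} - \tau(z)\I \Bigr\| < \varepsilon.
\]
Applying this with $z = y$ produces convex combinations tending to $\tau(y)\I = 0$, giving (5). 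Applying it with $z = \xi$ yields approximants $\sum t_i \lambda(h_i) \xi \lambda(h_i)^{-1}$ lying in the closed two-sided ideal generated by $\xi$ in $C^\ast_r(F)$ and converging to $\tau(\xi)\I = \I$, so that $\I$ lies in that ideal, giving (4).

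The main obstacle is that the forward implications rely on Powers' averaging for $F$, which in turn requires the $C^\ast$-simplicity of $F$ established by Le Boudec and Matte Bon subsequent to the current paper. A proof self-contained within this paper would instead need to construct the convex combinations of $y$-conjugates (resp.\ $\xi$-conjugates) converging to $0$ (resp.\ $\I$) directly from non-amenability of $F$; such a construction appears delicate and would presumably exploit the fact that $a$ and $b$ commute and are supported on complementary halves of $[0,1)$, together with Proposition~\ref{ac,am,gen} applied to the action of $F$ on itself.
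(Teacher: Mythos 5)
Your proof is correct, and the backward implications (2)$\Rightarrow$(1), (3)$\Rightarrow$(1) via $\sigma$ and Lemma~\ref{positive;norm}, as well as the reductions (4)$\Rightarrow$(2) and (5)$\Rightarrow$(3), coincide with the paper's argument. Where you genuinely diverge is in (1)$\Rightarrow$(4),(5): you import the full $C^\ast$-simplicity of $F$ (via Le Boudec--Matte Bon) together with Powers' averaging, which does yield convex combinations of conjugates of $y$ and $\xi$ converging to $\tau(y)\I=0$ and $\tau(\xi)\I=\I$ respectively, but at the cost of a much deeper and later theorem. The paper instead needs only the \emph{unique trace property} of $F$, which already follows from non-amenability (Remark~\ref{trace;F}, via Dudko--Medynets or the amenable-radical criterion of Breuillard--Kalantar--Kennedy--Ozawa), and which by \cite[Corollary~4.4]{Haa15} gives Dixmier-type averaging of a \emph{single} group element $\lambda(g)$, $g\neq e$, to $0$ --- a strictly weaker conclusion than Powers' averaging of arbitrary elements. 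The difficulty that this weaker tool cannot directly average the sum $\lambda(a)+\lambda(b)$ (conjugation moves $a$ and $b$ simultaneously) is resolved exactly by the structural feature you gestured at in your final paragraph: $b$ lies in the subgroup $F_1$ of elements fixing $[0,\tfrac12)$ and $a$ in the subgroup $F_2$ of elements fixing $[\tfrac12,1)$; both are isomorphic to $F$, hence have the unique trace property, and they commute elementwise. Averaging $\lambda(b)$ over conjugates from $F_1$ and $\lambda(a)$ over conjugates from $F_2$ to obtain $\tilde b$ and $\tilde a$ of norm $<\varepsilon$, the double average $\sum_{i,j}s_it_j\,\lambda(g_ih_j)(\lambda(a)+\lambda(b))\lambda(g_ih_j)^\ast$ splits as $\tilde a+\tilde b$ (since $F_1$-conjugation fixes $a$ and $F_2$-conjugation fixes $b$), giving (5), and the analogous identity for $\lambda(ab)-\lambda(a)-\lambda(b)$ produces $\tilde a\tilde b-\tilde a-\tilde b$ of small norm, giving (4). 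So your route is valid but anachronistic and heavier; the paper's route is self-contained relative to results available at the time and exploits the commuting supports of $a$ and $b$ exactly as you conjectured one would have to.
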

\begin{proof}
    We prove that \ref*{ideal:amen} implies~\ref*{ideal:whole;F}
    and~\ref*{ideal:zero;F}, that \ref*{ideal:whole;F}
    implies~\ref*{ideal:whole;T}, that \ref*{ideal:zero;F}
    implies~\ref*{ideal:zero;T}, that \ref*{ideal:whole;T}
    implies~\ref*{ideal:amen}, and that \ref*{ideal:zero;T}
    implies~\ref*{ideal:amen}.

    Some of these implications are straightforward.  That \ref*{ideal:whole;F}
    implies~\ref*{ideal:whole;T} follows from the fact that the inclusion
    $C^\ast_r(F) \subseteq C^\ast_r(T)$ is unital.  That is, if
    \ref*{ideal:whole;F} holds, then the closed two-sided ideal
    in~$C^\ast_r(T)$ generated by $\I + \lambda(ab) - \lambda(a) - \lambda(b)$
    contains~$C^\ast_r(F)$, and, in particular, the unit.  That
    \ref*{ideal:zero;F} implies~\ref*{ideal:zero;T} follows directly from the
    fact that the inclusion~$C^\ast_r(F) \subseteq \nobreak C^\ast_r(T)$ is
    isometric.

    Let us prove that \ref*{ideal:whole;T} implies~\ref*{ideal:amen}, and that
    \ref*{ideal:zero;T} implies~\ref*{ideal:amen}, or, equivalently, that the
    negation of~\ref*{ideal:amen} implies the negation
    of~\ref*{ideal:whole;T}, as well as the negation of~\ref*{ideal:zero;T}.
    Assume that $F$ is amenable.  Then by Proposition~\ref{prop:sigma}, $\pi$
    is weakly contained in the left regular representation of~$T$, that is,
    there exists a $\ast$\mbox-homomorphism~$\sigma$ so that $\pi = \sigma
    \circ \lambda$.  As explained in the proof of
    Theorem~\ref{thm:simplicity}, $\I + \lambda(ab) - \lambda(a) - \lambda(b)$
    is in the kernel of~$\sigma$, which is a non-trivial ideal.  Hence the
    negation of~\ref*{ideal:whole;T} holds.  Since $\sigma$ is a contraction,
    to prove the negation of~\ref*{ideal:zero;T} it suffices to show that $\|
    x \| \geq 1$, for every $x \in \cconv\{ \pi(hah^{-1}) + \pi(hbh^{-1}) : h
    \in T \}$.  As explained in the proof of Theorem~\ref{thm:simplicity},
    that $\pi(a) + \pi(b) = \I + \pi(ab)$.  Hence
    \begin{equation*}
        \cconv\{ \pi(hah^{-1}) + \pi(hbh^{-1}) : h \in T \}
            = \I + \cconv\{ \pi(habh^{-1}) : h \in T \}.
        \end{equation*}
    If $x$ is a finite convex combination of elements of the form~$habh^{-1}$
    with~$h \in T$, then $x \in \mathbb R_+ T$.  By Lemma~\ref{positive;norm},
    we conclude that $\| \I + x \| \geq \|\I\| = 1$.  Continuity then ensures
    that $\| \I + x \| \geq 1$, for every $x \in \cconv\{ \pi(hah^{-1}) +
    \pi(hbh^{-1}) : h \in T \}$.  Hence the negation of~\ref*{ideal:zero;T}
    holds.

    We are left to prove that \ref*{ideal:amen} implies~\ref*{ideal:whole;F}
    and~\ref*{ideal:zero;F}, so assume that $F$ is non-amenable.  To
    prove~\ref*{ideal:whole;F}, it suffices to show that the closed two-sided
    ideal in~$C^\ast_r(F)$ generated by $\I + \lambda(ab) - \lambda(a) -
    \lambda(b)$ contains~$\I$.  As it clearly contains the set
    \begin{align*}
        \cconv \big\{ \lambda(h)\big(&\I + \lambda(ab) - \lambda(a) -
        \lambda(b) \big) \lambda(h)^\ast : h \in F \big\}\\
        &= \I + \cconv \big\{ \lambda(habh^{-1}) - \lambda(hah^{-1}) -
        \lambda(hbh^{-1}) : h \in F \big\},
        \end{align*}
    it suffices to show that the closed convex hull on the right hand side
    contains~$0$.  Let $\varepsilon > 0$ be given.  As mentioned in
    Remark~\ref{trace;F}, $F$ has the unique trace property.  Let $F_1$ and
    $F_2$ denote the subgroups of $F$ consisting of elements~$f \in F$ so that
    $f(x) = x$, for all~$x \in [0, \frac 12)$, and $f(x) = x$, for all~$x \in
    [\frac 12, 1)$, respectively.  Note that $b \in F_1$ and~$a \in F_2$.
    Since $F_1$ and~$F_2$ are both isomorphic to~$F$, they both have the
    unique trace property.  It follows from \cite[Corollary~4.4]{Haa15} that
    there exist positive real numbers $s_1, \ldots, s_n, t_1, \ldots, t_m$
    with $\sum_{k=1}^n s_k = \sum_{k=1}^m t_k = 1$, and $g_1, \ldots, g_n \in
    F_1$ and $h_1, \ldots, h_m \in F_2$ so that
    \begin{equation*}
        \Big\| \sum_{k=1}^n s_k \lambda(g_k b g_k^{-1}) \Big\| <
        \varepsilon
        \qquad \text{and} \qquad
        \Big\| \sum_{k=1}^m t_k \lambda(h_k a h_k^{-1}) \Big\| <
        \varepsilon.
        \end{equation*}
    Let us, for simplicity of notation, denote the elements 
        $\sum_{k=1}^m t_k \lambda(h_k a h_k^{-1})$ and 
        $\sum_{k=1}^n s_k \lambda(g_k b g_k^{-1})$ 
        by $\tilde a$ and $\tilde b$, respectively.
    As the elements of the subgroups~$F_1$ and~$F_2$ commute, it is
    straightforward to check that
    \begin{equation*}
            \sum_{i=1}^n \sum_{j=1}^m s_i t_j
                \lambda(g_ih_j) \big(
                \lambda(a) + \lambda(b)
                \big) \lambda(g_ih_j)^\ast
        = \tilde a + \tilde b.
        \end{equation*}
    Since $\sum_{i=1}^n \sum_{j=1}^m s_i t_j = 1$, the left hand side belongs
    to the convex hull of $\{ \lambda(hah^{-1}) + \lambda(hbh^{-1}) : h \in F
    \}$, and since $\| \tilde a + \tilde b \| < 2 \varepsilon$, we conclude
    that the convex hull of $\{ \lambda(hah^{-1}) + \lambda(hbh^{-1}) : h \in
    F \}$ contains elements of arbitrarily small norm.  Thus
    \ref*{ideal:zero;F} holds.

    Using similar calculations, it is straightforward to check that
    \begin{equation*}
        \sum_{i=1}^n \sum_{j=1}^m s_i t_j
              \lambda(g_ih_j) \big(
              \lambda(ab)
            - \lambda(a)
            - \lambda(b)
            \big) \lambda(g_ih_j)^\ast
        = \tilde a \tilde b - \tilde a - \tilde b.
        \end{equation*}
    Again, the left hand side belongs to the convex hull of
    \begin{equation*}
        \{ \lambda(habh^{-1})-\lambda(hah^{-1})-\lambda(hbh^{-1}) : h\in F \}.
        \end{equation*}
    Since $\| \tilde a \tilde b - \tilde a - \tilde b \| < \varepsilon^2 -
    2\varepsilon$, we conclude that the closure of this convex hull
    contains~$0$, and as mentioned above, this means that the ideal generated
    by $\I + \lambda(ab) - \lambda(a) - \lambda(b)$ in~$C^\ast_r(F)$ is the
    whole of~$C^\ast_r(F)$.  This proves that \ref*{ideal:amen}
    implies~\ref*{ideal:whole;F} and~\ref*{ideal:zero;F}, which concludes the
    proof.
    \end{proof}

\noindent{\bfseries Acknowledgement.}  This work is part of the second named
author's Ph.D thesis at the University of Copenhagen under the supervision of
Magdalena Musat and Uffe Haagerup.  Kristian Olesen wishes to express his
sincere gratitude for their invaluable guidance and support throughout the
Ph.D period.  This manuscript was completed after the sudden passing away of
Uffe Haagerup.

%\bibliographystyle{plain}
%\bibliography{bibtemp}

\begin{thebibliography}{10}

\bibitem{BHV}
Bachir Bekka, Pierre de~la Harpe, and Alain Valette.
\newblock {\em Kazhdan's property ({T})}, volume~11 of {\em New Mathematical
  Monographs}.
\newblock Cambridge University Press, Cambridge, 2008.

\bibitem{BJ14}
Collin Bleak and Kate Juschenko.
\newblock Ideal structure of the {$C^*$}-algebra of {T}hompson group {$T$}.
\newblock {\em preprint}, 2014.
\newblock arXiv:1409.8099.

\bibitem{BKKO14}
Emmanuel Breuillard, Mehrdad Kalantar, Matthew Kennedy, and Narutaka Ozawa.
\newblock {$C^*$}-simplicity and the unique trace property for discrete groups.
\newblock {\em preprint}, 2014.
\newblock arXiv:1410.2518.

\bibitem{BO}
Nathanial~P. Brown and Narutaka Ozawa.
\newblock {\em {$C^*$}-algebras and finite-dimensional approximations},
  volume~88 of {\em Graduate Studies in Mathematics}.
\newblock American Mathematical Society, Providence, RI, 2008.

\bibitem{CFP96}
J.~W. Cannon, W.~J. Floyd, and W.~R. Parry.
\newblock Introductory notes on {R}ichard {T}hompson's groups.
\newblock {\em Enseign. Math. (2)}, 42(3-4):215--256, 1996.

\bibitem{CSS01}
Tullio~G. Ceccherini-Silberstein and Fabio Scarabotti.
\newblock Inner amenability of some groups of piecewise-linear homeomorphisms
  of the real line.
\newblock {\em J. Math. Sci. (New York)}, 106(4):3164--3167, 2001.
\newblock Pontryagin Conference, 8, Algebra (Moscow, 1998).

\bibitem{dlH07}
Pierre de~la Harpe.
\newblock On simplicity of reduced {$C^\ast$}-algebras of groups.
\newblock {\em Bull. Lond. Math. Soc.}, 39(1):1--26, 2007.

\bibitem{DuMe12}
Artem Dudko and Konstantin Medynets.
\newblock Finite factor representations of {H}igman-{T}hompson groups.
\newblock {\em preprint}, 2012.
\newblock arXiv:1212.1230.

\bibitem{Eff}
Edward~George Effros.
\newblock Property {$\Gamma $} and inner amenability.
\newblock {\em Proc. Amer. Math. Soc.}, 47:483--486, 1975.

\bibitem{Fos11}
Ariadna Fossas.
\newblock {$\mathrm{PSL}(2,\mathbb Z)$} as a non-distorted subgroup of
  {T}hompson's group {$T$}.
\newblock {\em Indiana Univ. Math. J.}, 60(6):1905--1925, 2011.

\bibitem{Haa15}
Uffe Haagerup.
\newblock A new look at {$C^*$}-simplicity and the unique trace property of a
  group.
\newblock {\em preprint}, 2015.
\newblock arXiv:1509.05880.

\bibitem{Imb97}
Michel Imbert.
\newblock Sur l'isomorphisme du groupe de {R}ichard {T}hompson avec le groupe
  de {P}tol\'em\'ee.
\newblock In {\em Geometric {G}alois actions, 2}, volume 243 of {\em London
  Math. Soc. Lecture Note Ser.}, pages 313--324. Cambridge Univ. Press,
  Cambridge, 1997.

\bibitem{IO16}
Nikolay~A. Ivanov and Tron Omland.
\newblock {$C^\ast$}-simplicity of free products with amalgamation and radical
  classes of groups.
\newblock {\em preprint}, 2016.
\newblock arXiv:1605.06395.

\bibitem{Jol97}
Paul Jolissaint.
\newblock Moyennabilit\'e int\'erieure du groupe {$F$} de {T}hompson.
\newblock {\em C. R. Acad. Sci. Paris S\'er. I Math.}, 325(1):61--64, 1997.

\bibitem{Jol98}
Paul Jolissaint.
\newblock Central sequences in the factor associated with {T}hompson's group
  {$F$}.
\newblock {\em Ann. Inst. Fourier (Grenoble)}, 48(4):1093--1106, 1998.

\bibitem{KK14}
Mehrdad Kalantar and Matthew Kennedy.
\newblock Boundaries of reduced {$C^\ast$}-algebras of discrete groups.
\newblock {\em preprint}, 2014.
\newblock arXiv:1405.4359.

\bibitem{Kat92}
Svetlana Katok.
\newblock {\em Fuchsian groups}.
\newblock Chicago Lectures in Mathematics. University of Chicago Press,
  Chicago, IL, 1992.

\bibitem{Bou15}
Adrien Le~Boudec.
\newblock Discrete groups that are not {$C^\ast$}-simple.
\newblock {\em preprint}, 2015.
\newblock arXiv:1507.03452.

\bibitem{LBMB16}
Adrien Le~Boudec and Nicolás Matte~Bon.
\newblock Subgroup dynamics and {$C^\ast$}-simplicity of groups of
  homeomorphisms.
\newblock {\em preprint}, 2016.
\newblock arXiv:1605.01651.

\bibitem{Nek04}
Volodymyr~V. Nekrashevych.
\newblock Cuntz-{P}imsner algebras of group actions.
\newblock {\em J. Operator Theory}, 52(2):223--249, 2004.

\bibitem{Ros81}
Joseph Rosenblatt.
\newblock Uniqueness of invariant means for measure-preserving transformations.
\newblock {\em Trans. Amer. Math. Soc.}, 265(2):623--636, 1981.

\bibitem{Salem43}
R.~Salem.
\newblock On some singular monotonic functions which are strictly increasing.
\newblock {\em Trans. Amer. Math. Soc.}, 53:427--439, 1943.

\bibitem{SerT}
Jean-Pierre Serre.
\newblock {\em Trees}.
\newblock Springer-Verlag, Berlin-New York, 1980.
\newblock Translated from the French by John Stillwell.

\bibitem{Vae12}
Stefaan Vaes.
\newblock An inner amenable group whose von {N}eumann algebra does not have
  property {G}amma.
\newblock {\em Acta Math.}, 208(2):389--394, 2012.

\end{thebibliography}
%

\end{document}